\documentclass[12pt,reqno]{amsart}
\setlength{\voffset}{-.25in}
\sloppy
\usepackage[usenames]{color}
\usepackage{courier}

\usepackage{amssymb,amsmath,amsfonts,latexsym,amsthm}
\usepackage{graphicx}
\usepackage{psfrag}
\usepackage{url}		

\textwidth=6.175in
\textheight=9.0in
\headheight=13pt
\calclayout

\DeclareMathOperator{\sgn}{sgn}
\DeclareMathOperator{\Hn}{H}


\theoremstyle{plain}
\newtheorem{theorem}{Theorem}[section]
\newtheorem{proposition}{Proposition}
\newtheorem{corollary}[theorem]{Corollary}
\newtheorem{lemma}[theorem]{Lemma}

\newtheorem{problem}[theorem]{Problem}
\theoremstyle{definition}

\usepackage{multirow}

\begin{document}
\setcounter{page}{1}

\title{Fibonacci--Theodorus Spiral and its properties}
\author[M. R. Bacon]{Michael R. Bacon}
\address{Sumter, South Carolina, U.S.A}
\email{baconmr@gmail.com}
\author[C. K. Cook]{Charles K. Cook}
\address{Professor emeritus \\
               University of South Carolina Sumter\\
               Sumter, South Carolina, U.S.A}
\email{charliecook29150@aim.com}
\author[R. Fl\'orez]{Rigoberto Fl\'orez}
\thanks{The third author was partially supported by The Citadel Foundation.}
\address{Department of Mathematical Sciences\\
		The Citadel\\
		Charleston, SC\\
		U.S.A.}
  \email{rflorez1@citadel.edu}
\author[R. A. Higuita]{Robinson A. Higuita}
 \address{Instituto de Matem\'aticas\\
        Universidad de Antioquia\\
        Medell\'in}
        \email{robinson.higuita@udea.edu.co}
        \author[F. Luca]{Florian Luca}
	\address{Mathematics Division, Stellenbosch University, Stellenbosch, South Africa and Centro de Ciencias Matem\'aticas UNAM, Morelia, Mexico}
	\email{fluca@sun.ac.za}
\author[J. L. Ram\'{\i}rez]{Jos\'e L. Ram\'{\i}rez}
\address{Departamento de Matem\'aticas,  Universidad Nacional de Colombia,  Bogot\'a, Colombia}
\email{jlramirezr@unal.edu.co}

\begin{abstract}
Inspired by the ancient spiral constructed by the Greek philosopher Theodorus, which is based on a concatenated sequence of right triangles, we have developed a new spiral. This spiral, called the \emph{Fibonacci--Theodorus}, features triangle sides with lengths corresponding to Fibonacci numbers. This concept can be naturally generalized to other second-order recurrence relations.

Our exploration of the Fibonacci--Theodorus spiral connects classic results from ancient geometry with the study of sequences and their asymptotic behavior. We examine sequential properties such as the areas, perimeters, and angles of the triangles forming the spiral. Notably, we establish a relationship between the ratio of two consecutive areas and the golden ratio---a pattern that also extends to angles sharing a common vertex. Additionally, we present several asymptotic results, demonstrating, for example, that the sum of the first $n$ areas in the spiral approaches a multiple of the sum of the initial $n$ Fibonacci numbers.

Finally, Hahn, in his work \cite{Hahn}, observed a potential connection between the golden ratio and the ratio of areas between spines of lengths $\sqrt{F_{n+1}}$ and $\sqrt{F_{n+2}-1}$, and those between spines of lengths $\sqrt{F_{n}}$ and $\sqrt{F_{n+1}-1}$ in the Theodorus spiral. However, his work did not provide a formal proof of this conjecture. In this paper, we present a proof for Hahn's conjecture.
\end{abstract}

\maketitle
\emph{Keywords: }
\noindent 
Theodorus Spiral, Fibonacci number, Hahn conjecture, area of a spiral, perimeter of a spiral. 

\noindent MSC2020: 11B39, 33B10

\section{Introduction}
Theodorus, a Greek philosopher and member of the Pythagorean Society, lived from 465 BC to 398 BC. His significant contributions to the development of irrational numbers, particularly in the context of square roots such as $\sqrt{3}, \sqrt{5}, \ldots, \sqrt{17}$ (as referenced in \cite[p. 95]{ef}), are highly regarded by mathematicians; see, for example, the ``A historical digression" by Hardy and Wright \cite[p. 42]{Hardy}. 
Plato notably mentioned Theodorus and his work on square roots, as recorded in the following passage: 
\begin{quote}
   [Theodorus] was proving to us a certain thing about square roots, I mean the side (i.e. root) of a square of three square units and of five square units, that these roots are not commensurable in length with the unit length, and he went on in this way, taking all the separate cases up to the root of seventeen square units, at which point, for some reason, he stopped. (See \cite[pp. 203-204 and pp. 209-212]{Heath} and  \cite{ef, Plato}.) 
\end{quote} 
	\begin{figure}[ht]
		\centering
		\includegraphics[scale=1.1]{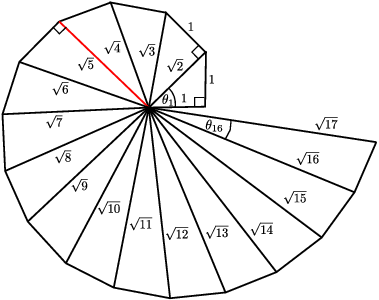}
		 \caption{The Spiral of Theodorus.}
		 \label{TheodorusSpiral} 
	\end{figure}

There is a conjecture that Theodorus may have ceased his investigations due to the absence of a general method for tackling the problem (as discussed in \cite[p138]{Shanks}).

The Pythagorean beliefs concerning irrational numbers are steeped in mythology, including the infamous tale of the drowning of Hippasus for his discoveries.

\begin{quote} The \emph{spiral of Theodorus}, as depicted in Figure \ref{TheodorusSpiral},  is a geometric spiral formed by interconnected right-angled triangles. Each triangle in the sequence has a height of 1 and a base constructed from the hypotenuse of the previous triangle. The initial triangle in the sequence is an isosceles triangle with a length of one unit. This spiral, known as the square root spiral, Einstein spiral, or Pythagorean spiral, was first constructed by Theodorus of Cyrene. (See \cite{Tyler}).
\end{quote}

In our discussion, we refer to the adjacent/hypotenuse sides of the spiral as the \emph{spines}. 
For example, in Figure \ref{TheodorusSpiral}, we highlight the spine labeled with $\sqrt{5}$, which plays a dual role as the hypotenuse for the right-hand triangle and as a leg for the left-hand triangle.

The spiral of Theodorus has been the subject of various investigations, 
and some questions regarding its behavior have been answered, see for example, \cite{Hahn}. Additionally, Crilly \cite{Crilly} presents a generalization of Theodorus, and his approach is accessible to undergraduate students.
One aspect of interest is the behavior of the angle of the $n$-th triangle as $n$ approaches infinity. From the triangles in Figure \ref{TheodorusSpiral}, it is evident that the angle, denoted by $\theta_n$, approaches zero as $n$ tends to infinity.
Thus, 
$$\lim_{n\to\infty}\tan(\theta_n)=\lim_{n\to\infty}\frac{1}{\sqrt{n}}=0.$$
After overlapping two consecutive spines, it is evident that the differences tend to zero as $n$ tends to infinity. Thus, $$\lim_{n\to\infty}(\sqrt{n+1}-\sqrt{n})=0.$$

Surprisingly, as $n$ increases and $\theta_n$ decreases, it takes a greater number of triangles to complete a full revolution. However, the distance between windings of the spiral approaches $\pi$. A simple explanation for this can be rationalized as follows: when $n$ is sufficiently large, $\sin(1/{\sqrt{n}})$ approximates $1/{\sqrt{n}}$. Consequently, it takes at least $\lceil 2\pi\sqrt{n}\,\rceil$ triangles to complete one full winding. Therefore, the approximate distance between windings can be expressed as $\sqrt{n+\lceil 2\pi\sqrt{n}\,\rceil}-\sqrt{n}$. The L'H\^{o}pital's  rule gives that the limit of this expression is 
$$\lim_{x\to\infty}(\sqrt{x+2\pi\sqrt{x}}-\sqrt{x})=\pi.$$ In other words, the distance between windings of the spiral tends towards $\pi$.
For a more detailed and thorough explanation, see \cite[pp. 21-22]{Hahn}.

Hahn  \cite{Hahn} observed a potential connection between the golden ratio and the ratio of areas between spines of lengths $\sqrt{F_{n+1}}$ and $\sqrt{F_{n+2}-1}$. Additionally, he examined the areas between spines of lengths $\sqrt{F_{n}}$ and $\sqrt{F_{n+1}-1}$ in the Theodorus spiral, where Fibonacci numbers are defined as  $F_{n+1}=F_n+F_{n-1}$ for $n\geq 2$, with $F_1=1$ and $F_2=1$. 
However, no formal proof has been provided in his work (see \cite{Hahn}). In this paper, we prove Hahn's conjecture. 

We construct a spiral analogous to that of Theodorus, extending the construction to a general second-order recurrence relation. While our primary focus is on the Fibonacci sequence, we discuss the general case at the end of the paper. For simplicity, we often use the term `Fibonacci spiral' when no ambiguity arises. Although this approach can be applied to any second-order recurrence relation, such as Pell, Lucas, or Jacobsthal sequences, our main focus remains on the Fibonacci sequence, with encouragement for readers to explore similar spirals based on other sequences.

The process involves a particular triangle, as illustrated on the left and in the middle of Figure \ref{FibonacciTheodorusSpiral}. For $n\geq 1$, this triangle exhibits an adjacent side of $\sqrt{F_{n+1}}$, an opposite side of $\sqrt{F_{n}}$, and a hypotenuse of $\sqrt{F_{n+2}}$. Connecting each of these triangles, as demonstrated in the figure, we achieve the desired Fibonacci--Theodorus spiral. The right-hand side of the figure provides a depiction of the Fibonacci--Theodorus spiral construction, showcasing its foundation in the general construction method.

    \begin{figure}[h!]
     \centering	 \includegraphics[width=3.5in]{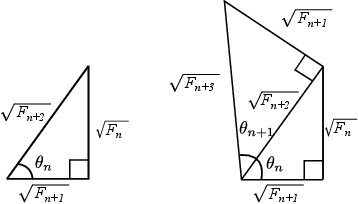} \hspace{1cm}
  \includegraphics[scale = 0.3]{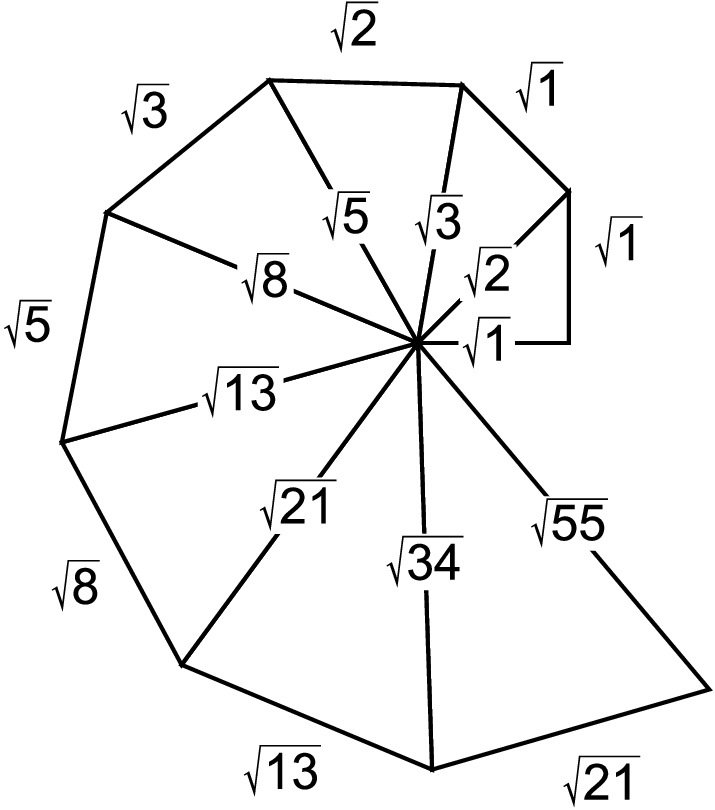}
	\caption{Fibonacci--Theodorus Spiral.}
	 \label{FibonacciTheodorusSpiral} 
    \end{figure}

The nature of the spiral allows us to explore classic questions about triangles. Throughout the paper, we present the area of the circumscribed circle, the ratio between the areas of the circumscribed and inscribed circles, among other classic geometric results.  

\section{Area of the Fibonacci--Theodorus spiral}

In this section, we demonstrate that the ratio of two consecutive areas of the Fibonacci spiral converges to the golden ratio $\varphi=(1+\sqrt{5})/2$. In the second major result of this section we show a relationship between the sum of the first consecutive areas and the Fibonacci numbers.

Let $\Delta_n$ represent the $n$-th triangle of the spiral depicted on the left-hand side of Figure \ref{FibonacciTheodorusSpiral}, with legs $\sqrt{F_n}$, $\sqrt{F_{n+1}}$, and hypotenuse $\sqrt{F_{n+2}}$. We use $\theta_n$ to denote the acute angle located at the common vertex in the spiral. Here $\sqrt{F_n}$ is the opposite leg to $\theta_n$ in $\Delta_n$. 
Each triangle can be represented as an ordered triple: (length of the adjacent side, length of the opposite side, length of the hypotenuse). Therefore, $\Delta_n = (\sqrt{F_{n+1}}, \sqrt{F_n}, \sqrt{F_{n+2}})$. 

We denote the area of each triangle $\Delta_n$ as $A_n$. For example, from the right-hand side of Figure \ref{FibonacciTheodorusSpiral}, we can observe that:
$$A_1=\frac{\sqrt{1\cdot 1}}{2},\quad A_2=\frac{\sqrt{1\cdot 2}}{2},\quad A_3=\frac{\sqrt{2\cdot 3}}{2},\quad A_4=\frac{\sqrt{3\cdot 5}}{2},\quad \dots, \quad A_n=\frac{\sqrt{F_n \cdot F_{n+1}}}{2}.$$ 

In Theorem \ref{RatioArea}, we establish a clear connection between the ratio of two consecutive areas and the divine proportion, $\varphi$.
In this paper, we adopt the symbols  $\varphi$ and  $\beta$ to represent $\left(1+\sqrt{5}\right)/2$ and $\left(1-\sqrt{5}\right)/2$, respectively.
It is important to note that Kepler discovered this  Pythagorean identity $\varphi^2=\varphi+1$. 
Whenever we use this identity within this paper will be made under the name ``Kepler's identity". 
 There is a beautiful relation between the golden ratio and Fibonacci numbers: $\lim_{n\to \infty} {F_{n+1}}/{F_{n}}= \varphi$. Here we leverage this expression to establish another relation between the golden ratio and the ratio of the two consecutive areas of the spiral.  

\begin{theorem}\label{RatioArea}  
$\lim_{n\to \infty} {A_{n+1}}/{A_{n}}= \varphi.
$
\end{theorem}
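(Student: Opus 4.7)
The plan is to compute $A_{n+1}/A_n$ directly from the explicit formula $A_n = \sqrt{F_n F_{n+1}}/2$ given just before the theorem, and then reduce the limit to the well-known fact that consecutive Fibonacci ratios tend to $\varphi$.

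First I would write
\[
\frac{A_{n+1}}{A_n} \;=\; \frac{\sqrt{F_{n+1}F_{n+2}}/2}{\sqrt{F_n F_{n+1}}/2} \;=\; \sqrt{\frac{F_{n+2}}{F_n}},
\]
after the $F_{n+1}$ cancels inside the radical. This turns the problem into evaluating $\lim_{n\to\infty} F_{n+2}/F_n$.

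Next, I would split the ratio as $F_{n+2}/F_n = (F_{n+2}/F_{n+1})(F_{n+1}/F_n)$. Each factor tends to $\varphi$ by the standard limit $\lim_{n\to\infty} F_{n+1}/F_n = \varphi$ already mentioned in the excerpt, so the product tends to $\varphi^2$. By Kepler's identity $\varphi^2 = \varphi+1$, which the authors have explicitly highlighted, we can equivalently write the limit as $\varphi+1$, but it is simpler to keep it as $\varphi^2$ for the next step.

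Finally, by continuity of the square root on $(0,\infty)$ and the positivity of $\varphi$,
\[
\lim_{n\to\infty}\frac{A_{n+1}}{A_n} \;=\; \sqrt{\lim_{n\to\infty}\frac{F_{n+2}}{F_n}} \;=\; \sqrt{\varphi^2} \;=\; \varphi,
\]
which is the desired conclusion. There is no real obstacle here: the argument is essentially a two-line reduction, and the only subtlety worth flagging is making sure to split $F_{n+2}/F_n$ into two consecutive-ratio factors rather than trying to invoke a ``second-order ratio'' limit directly. I would present the proof in exactly this order so that the appearance of $\varphi^2$, and hence the natural invocation of Kepler's identity, is transparent to the reader.
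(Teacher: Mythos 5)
Your proposal is correct and follows essentially the same route as the paper: both reduce $A_{n+1}/A_n$ to $\sqrt{F_{n+2}/F_n}$ and identify the limit as $\sqrt{\varphi^2}=\varphi$. Your version merely makes explicit the factorization $F_{n+2}/F_n=(F_{n+2}/F_{n+1})(F_{n+1}/F_n)$ and the continuity of the square root, details the paper leaves to ``some simplification.''
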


\begin{proof} From the definition of $A_n$ and after some simplification we have  
\begin{equation*}
  \lim_{n\to \infty} \frac{A_{n+1}}{A_{n}}=\sqrt{\lim_{n\to \infty} \frac{F_{n+2} }{F_{n}}}=\sqrt{\varphi^2}. \qedhere 
\end{equation*}
\end{proof}

\begin{corollary}     
$\lim_{n\to \infty} \theta_n=\csc^{-1}(\varphi)$.
\end{corollary}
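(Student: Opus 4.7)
The plan is to read off $\sin\theta_n$ directly from the right triangle $\Delta_n=(\sqrt{F_{n+1}},\sqrt{F_n},\sqrt{F_{n+2}})$ and then pass to the limit, so that the corollary reduces to the Fibonacci ratio limit that underlies Theorem \ref{RatioArea}.

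First I would recall that since $\sqrt{F_n}$ is the leg opposite $\theta_n$ and $\sqrt{F_{n+2}}$ is the hypotenuse, the elementary right-triangle identity gives
\begin{equation*}
\sin\theta_n \;=\; \frac{\sqrt{F_n}}{\sqrt{F_{n+2}}} \;=\; \sqrt{\frac{F_n}{F_{n+2}}}.
\end{equation*}
Next I would compute the limit of the radicand. Writing $F_{n+2}/F_n = (F_{n+2}/F_{n+1})(F_{n+1}/F_n)$ and using the classical fact $\lim_{n\to\infty}F_{n+1}/F_n=\varphi$, we obtain $\lim_{n\to\infty}F_{n+2}/F_n=\varphi^2$. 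By continuity of the square root,
\begin{equation*}
\lim_{n\to\infty}\sin\theta_n \;=\; \sqrt{\frac{1}{\varphi^2}} \;=\; \frac{1}{\varphi},
\end{equation*}
or equivalently $\lim_{n\to\infty}\csc\theta_n=\varphi$.

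Finally I would invoke the continuity of $\csc^{-1}$ on $(1,\infty)$ together with the fact that each $\theta_n\in(0,\pi/2)$ (since each $\Delta_n$ is a right triangle with $\theta_n$ acute), so that $\csc^{-1}(\csc\theta_n)=\theta_n$ for all $n$. Passing to the limit yields $\lim_{n\to\infty}\theta_n=\csc^{-1}(\varphi)$, which is the claim.

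I do not anticipate any real obstacle: the argument is a one-line application of the sine ratio combined with Theorem \ref{RatioArea}'s underlying Fibonacci limit. The only mild care needed is justifying the interchange of limit with $\csc^{-1}$, which is immediate from continuity on the relevant domain.
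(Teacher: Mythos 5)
Your proposal is correct and is essentially the paper's own argument: the paper likewise writes $\theta_n=\csc^{-1}\bigl(\sqrt{F_{n+2}/F_n}\bigr)$ and passes to the limit using $F_{n+2}/F_n\to\varphi^2$ from Theorem \ref{RatioArea}. You merely spell out the continuity justification that the paper leaves implicit.
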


\begin{proof} 
Since $\theta_n= \csc^{-1}(\sqrt{F_{n+2}/F_n})$, the result follows taking the limit and then using the previous theorem.
\end{proof}

\begin{proposition}\label{Lema1AreaProp} Let $T(n)=2 \left(\left\lfloor \frac{n-1}{2}\right\rfloor +1\right)-n =  n-2 \left\lfloor \frac{n}{2}\right\rfloor$. Then these hold: 
\begin{enumerate}
\item \label{Lema1AreaPropPart1}
$$
\sum_{k=1}^n \dfrac{(-\varphi)^{(-1)^k-k}}{\sqrt{5\varphi}} = \dfrac{(2\varphi)^{T(n)}\varphi^{-n} -1}{\sqrt{5 \varphi^5}}.
$$
\item \label{Lema1AreaPropPart2}
$$
 \sum_{k=1}^n \dfrac{\varphi^{{\big((-1)^{k+1}-2(k+1)\big)/2}}}{\sqrt{5}}=\dfrac{2-\varphi^{-n+T(n)}+\varphi- \varphi^{-n+2-T(n)}}{\sqrt{5\varphi^5}}.  
$$
\item  \label{Lema1AreaPropPart3}
$$\sum_{k=1}^n \varphi^{k+1/2}/\sqrt{5}= \sqrt{\varphi}(F_{n+2}-1)+\beta^n-1/\sqrt{5\varphi^3}.$$
\end{enumerate}
\end{proposition}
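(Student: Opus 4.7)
My plan is to treat all three sums as essentially geometric series. Parts (1) and (2) require splitting the sum by the parity of the summation index $k$, since the exponents depend on $(-1)^k$; each piece then becomes a true geometric series of ratio $\varphi^{-2}$. Part (3) is a pure geometric sum, but Binet's formula will be used to rewrite the answer in terms of Fibonacci numbers and powers of $\beta$.

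For part (\ref{Lema1AreaPropPart1}), I would first evaluate $(-\varphi)^{(-1)^k - k}$ case by case: for $k = 2j-1$ odd it simplifies to $\varphi^{-2j}$, and for $k=2j$ even it becomes $-\varphi^{1-2j}$. Pairing consecutive terms gives $\varphi^{-2j}(1-\varphi) = -\varphi^{-2j-1}$ by Kepler's identity. Summing a geometric series in $\varphi^{-2}$ and using the simplification $1-\varphi^{-2} = \varphi^{-1}$ (again from Kepler) produces the closed form for even $n=2m$. For odd $n=2m+1$ I would add the unpaired term $\varphi^{-2m-2}$, which exactly doubles the main contribution and accounts for the factor $(2\varphi)^{T(n)}$: when $n$ is odd, $T(n)=1$ and $2\varphi \cdot \varphi^{-n} = 2\varphi^{-2m}$, while for even $n$, $T(n) = 0$ and the factor is trivial.

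Part (\ref{Lema1AreaPropPart2}) follows the same recipe. After checking that the exponent equals $-k - 1/2$ for odd $k$ and $-k - 3/2$ for even $k$, I split the sum into two geometric series of ratio $\varphi^{-2}$. The key algebraic simplification here is the identity $\varphi^2 + 1 = \sqrt{5}\,\varphi$ (an easy consequence of Binet's formula applied to $F_3 = 2$), which turns the factor $\varphi^{1/2} + \varphi^{-3/2}$ into $\sqrt{5}\,\varphi^{-3/2}$, eliminating the awkward $\sqrt{5}$ in the denominator. The two terms $\varphi^{-n+T(n)}$ and $\varphi^{-n+2-T(n)}$ in the closed form arise from the distinct exponents of the even-$k$ and odd-$k$ partial sums, and the $T(n)$ toggles which end of the paired sum is truncated when $n$ is odd.

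For part (\ref{Lema1AreaPropPart3}) I would substitute Binet's formula $\varphi^k = \sqrt{5}\,F_k + \beta^k$ inside the sum, obtaining
\begin{equation*}
\sum_{k=1}^n \frac{\varphi^{k+1/2}}{\sqrt 5} = \frac{\sqrt\varphi}{\sqrt 5}\left(\sqrt 5 \sum_{k=1}^n F_k + \sum_{k=1}^n \beta^k\right).
\end{equation*}
The Fibonacci partial sum identity gives $\sum_{k=1}^n F_k = F_{n+2}-1$, and the geometric series in $\beta$ combined with $\beta - 1 = -\varphi$ and $\beta/\varphi = -\varphi^{-2}$ produces $(\beta^n-1)/\varphi^2$. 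Multiplying by $\sqrt{\varphi}/\sqrt{5}$ yields the stated right-hand side.

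The main obstacle is entirely bookkeeping: tracking signs, the parity of $k$ versus the parity of $n$, and repeatedly invoking Kepler's identity in its several forms ($\varphi^{-1}=\varphi-1$, $\varphi^{-2}=2-\varphi$, $\varphi^2+1=\sqrt{5}\,\varphi$). The $T(n)$ notation, though compact in the statement, forces a careful case analysis in the inductive or direct computation; once the even/odd split is made explicit, the geometric-series machinery runs without difficulty.
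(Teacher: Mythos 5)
Your proposal is correct and follows essentially the same route as the paper: parts (1) and (2) are handled by splitting on the parity of $k$ into geometric series of ratio $\varphi^{-2}$ and reassembling via $T(n)$ and Kepler's identity, and part (3) reduces to a geometric sum converted to Fibonacci form by Binet (you apply Binet termwise and use $\sum_{k=1}^n F_k=F_{n+2}-1$, the paper sums first and applies Binet at the end --- an immaterial difference). The only blemish is the slip $\varphi^{1/2}+\varphi^{-3/2}=\sqrt{5}\,\varphi^{-1/2}$, not $\sqrt{5}\,\varphi^{-3/2}$, which does not affect the viability of the argument.
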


\begin{proof}   First, we observe that the Kepler identity is equivalent to $1 - \varphi^{-2} = \varphi^{-1}$. This, combined with the geometric sum, yields:
\begin{equation}\label{Lema1AreaFormula1}
\sum_{k=0}^{a}\varphi^{-2k}=\dfrac{1-\varphi^{-2(a+1)}}{1-\varphi^{-2}}=\varphi\big(1-\varphi^{-2(a+1)}\big).    
\end{equation}
We prove Part \ref{Lema1AreaPropPart1}.
\begin{eqnarray*}
\sum_{k=1}^n \dfrac{(-1)^{k+1}\varphi^{(-1)^k-k}}{\sqrt{5\varphi}}&=& \sum_{k=0}^{\left\lfloor\frac{n-1}{2}\right\rfloor} \frac{ \varphi^{(-1)^{2 k+1}-1 / 2-(2 k+1)}}{\sqrt{5}}-\sum_{k=1}^{\left\lfloor\frac{n}{2}\right\rfloor}\frac{\varphi^{1 / 2} \varphi^{-2 k}}{\sqrt{5}} \\
&=&\sum_{k=0}^{\left\lfloor\frac{n-1}{2}\right\rfloor}  \frac{\varphi^{-2 k}}{\sqrt{5 \varphi^5}}-\sum_{k=0}^{\left\lfloor\frac{n}{2}\right\rfloor-1}\frac{\varphi^{-2 k}}{\sqrt{5\varphi^3}}.\\
\end{eqnarray*}
Since $T(n)=2 \left(\left\lfloor \frac{n-1}{2}\right\rfloor +1\right)-n =  n-2 \left\lfloor \frac{n}{2}\right\rfloor$,  
using the result in \eqref{Lema1AreaFormula1}, we have: 
\begin{eqnarray*}
\sum_{k=1}^n \dfrac{(-1)^{k+1}\varphi^{(-1)^k-k}}{\sqrt{5\varphi}}&=&
\dfrac{\varphi(1-\varphi^{-2(\left\lfloor\frac{n-1}{2}\right\rfloor+1)})}{\sqrt{5 \varphi^5}}-\dfrac{\varphi(1-\varphi^{-2\left\lfloor\frac{n}{2}\right\rfloor})}{\sqrt{5\varphi^3}}\\
&=& \frac{1-\varphi^{-n-T(n)}}{\sqrt{5 \varphi^3}}-\frac{1-\varphi^{-n+T(n)}}{\sqrt{5 \varphi}} \\ 
&=& \dfrac{1-\varphi^{-n-T(n)}-\varphi+\varphi^{-n+T(n)+1}}{\sqrt{5 \varphi^3}}\\
&=&\dfrac{\varphi^{-n}(\varphi^{T(n)+1}-\varphi^{-T(n)}) -\varphi^{-1}}{\sqrt{5 \varphi^3}}
= \dfrac{(2\varphi)^{T(n)}\varphi^{-n} -1}{\sqrt{5 \varphi^5}}.
\end{eqnarray*}

Next, we prove Part \ref{Lema1AreaPropPart2}, again, using the geometric sum: 
\begin{eqnarray*}
    \sum_{k=1}^n \dfrac{\varphi^{-\frac{2+(-1)^k+2k}{2}}}{\sqrt{5}}&=&     \sum_{k=1}^{\left\lfloor\frac{n}{2}\right\rfloor} \dfrac{\varphi^{-2k}}{\sqrt{5\varphi^3}}+\sum_{k=0}^{\left\lfloor\frac{n-1}{2}\right\rfloor} \dfrac{\varphi^{-2k}}{\sqrt{5\varphi^3}} \\ &=& 
    \varphi^{-2}\sum_{k=0}^{\left\lfloor\frac{n}{2}\right\rfloor-1} \dfrac{\varphi^{-2k}}{\sqrt{5\varphi^3}}+  \sum_{k=0}^{\left\lfloor\frac{n-1}{2}\right\rfloor} \dfrac{\varphi^{-2k}}{\sqrt{5\varphi^3}}\\
    &= &\dfrac{1-\varphi^{-n+T(n)}+\varphi^2- \varphi^{-n+2-T(n)}}{\sqrt{5\varphi^5}}.
\end{eqnarray*}

Now, we prove Part \ref{Lema1AreaPropPart3}. It is easy to verify that $\varphi=1/(\varphi-1)$. This implies 
$$\sum_{k=1}^n\dfrac{\varphi^{k+1/2}}{\sqrt{5}}=\sum_{k=0}^{n-1}\dfrac{\varphi^{k+3/2}}{\sqrt{5}}=\sqrt{\dfrac{\varphi^3}{5}}\dfrac{\varphi^n-1}{\varphi-1}=\sqrt{\dfrac{\varphi^5}{5}}(\varphi^n-1)=\sqrt{\dfrac{\varphi}{5}}(\varphi^{n+2}-\varphi^2).$$
To complete the proof, we use these two facts: $\varphi^{n+2}=\sqrt{5}F_{n+2}+\beta^{n+2}$ and $\varphi^2=\sqrt{5}+\beta^2$. Thus,   
$$
\sum_{k=1}^n\dfrac{\varphi^{k+1/2}}{\sqrt{5}}=\sqrt{\dfrac{\varphi}{5}}\left(\sqrt{5}F_{n+2}+\beta^{n+2}-\sqrt{5}-\beta^2\right)=\sqrt{\varphi}(F_{n+2}-1)+\dfrac{\beta^n-1}{\sqrt{5\varphi^3}}.
$$
This completes the proof. 
\end{proof}

\begin{corollary}\label{Lema1Area} For $n\in\mathbb{Z}_{>0}$ these inequalities hold: 
 $$\dfrac{-1}{\sqrt{5 \varphi^5}}\le 
\sum_{k=1}^n \dfrac{(-\varphi)^{(-1)^k-k}}{\sqrt{5\varphi}} 
\quad \mbox{ and } \quad \sum_{k=1}^n \dfrac{\varphi^{{\big((-1)^{k+1}-2(k+1)\big)/2}}}{\sqrt{5}}\le \dfrac{2+\varphi}{\sqrt{5\varphi^5}}.$$
\end{corollary}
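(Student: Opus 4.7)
The plan is to derive both inequalities as immediate consequences of Proposition~\ref{Lema1AreaProp}, parts~(\ref{Lema1AreaPropPart1}) and (\ref{Lema1AreaPropPart2}), combined with the elementary observation that every real power of $\varphi$ is strictly positive.

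For the first inequality, I would begin by reconciling the form of the summand with the one treated in Part~(\ref{Lema1AreaPropPart1}). Writing $(-\varphi)^{(-1)^k-k}=(-1)^{(-1)^k-k}\varphi^{(-1)^k-k}$ and splitting on the parity of $k$, one checks that $(-1)^{(-1)^k-k}=(-1)^{k+1}$ for every $k\ge 1$ (for odd $k$ the exponent $(-1)^k-k=-1-k$ is even, for even $k$ it equals $1-k$ which is odd). Hence the sum on the left of the corollary coincides with the one evaluated in Part~(\ref{Lema1AreaPropPart1}), namely $\bigl((2\varphi)^{T(n)}\varphi^{-n}-1\bigr)/\sqrt{5\varphi^5}$. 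Since $T(n)\in\{0,1\}$ and $\varphi>0$, the quantity $(2\varphi)^{T(n)}\varphi^{-n}$ is strictly positive, so the numerator is bounded below by $-1$, which yields the required lower bound.

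For the second inequality, I would apply Proposition~\ref{Lema1AreaProp}(\ref{Lema1AreaPropPart2}) directly. Rearranging the numerator and invoking Kepler's identity $\varphi^2=\varphi+1$ to replace $1+\varphi^2$ by $2+\varphi$ (or simply grouping the constant terms as presented in the statement), the sum equals
\[
\frac{(2+\varphi)-\varphi^{-n+T(n)}-\varphi^{-n+2-T(n)}}{\sqrt{5\varphi^5}}.
\]
Because both $\varphi^{-n+T(n)}$ and $\varphi^{-n+2-T(n)}$ are positive, the numerator never exceeds $2+\varphi$, giving the claimed upper bound.

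There is no real obstacle here: all nontrivial work has already been performed in Proposition~\ref{Lema1AreaProp}, and what remains is a positivity observation on its closed forms. The only minor bookkeeping is the sign verification identifying $(-\varphi)^{(-1)^k-k}$ with $(-1)^{k+1}\varphi^{(-1)^k-k}$, so that Part~(\ref{Lema1AreaPropPart1}) applies verbatim.
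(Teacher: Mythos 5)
Your proposal is correct and follows essentially the same route as the paper: both inequalities are read off from the closed forms in Proposition~\ref{Lema1AreaProp}, parts~(\ref{Lema1AreaPropPart1}) and~(\ref{Lema1AreaPropPart2}), by discarding the manifestly positive terms $(2\varphi)^{T(n)}\varphi^{-n}$, $\varphi^{-n+T(n)}$, and $\varphi^{-n+2-T(n)}$. Your sign check that $(-\varphi)^{(-1)^k-k}=(-1)^{k+1}\varphi^{(-1)^k-k}$ is a correct and worthwhile piece of bookkeeping that the paper leaves implicit.
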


\begin{proof}
From Proposition \ref{Lema1AreaProp} Part \ref{Lema1AreaPropPart1} this holds: 
$$
\dfrac{-1}{\sqrt{5 \varphi^5}} \le  \dfrac{(2\varphi)^{T(n)}\varphi^{-n} -1}{\sqrt{5 \varphi^5}} =\sum_{k=1}^n \dfrac{(-\varphi)^{(-1)^k-k}}{\sqrt{5\varphi}} .
$$

From Proposition \ref{Lema1AreaProp} Part \ref{Lema1AreaPropPart2} this holds:  

$$
 \sum_{k=1}^n \dfrac{\varphi^{{\big((-1)^{k+1}-2(k+1)\big)/2}}}{\sqrt{5}}=\dfrac{2-\varphi^{-n+T(n)}+\varphi- \varphi^{-n+2-T(n)}}{\sqrt{5\varphi^5}}  \le  \dfrac{2+\varphi}{\sqrt{5\varphi^5}}.
$$
This completes the proof.
\end{proof}

We use $\sgn(x)$ to represent $x/|x|$, the sign function of $x\ne 0$. 
The next theorem demonstrates that the sum of the first $n$ areas of the triangles within the spiral is of order $F_{n+2}$. 

\begin{theorem} If $n\in\mathbb{Z}_{>0}$, then 
$$\displaystyle{ \Big|\sum_{i=1}^n A_i- \frac{\sqrt{\varphi}}{2}(F_{n+2}-1)\Big|}\le \frac{1}{4}.$$
\end{theorem}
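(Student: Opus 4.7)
My plan is to approximate $2A_k=\sqrt{F_k F_{k+1}}$ by its leading-order exponential piece and use Proposition~\ref{Lema1AreaProp}\eqref{Lema1AreaPropPart3} to collapse the corresponding sum to $\sqrt{\varphi}(F_{n+2}-1)$, controlling the residual via an alternating-sign argument. First, Binet's formula combined with $\varphi\beta=-1$ gives
$$4A_k^2 = F_k F_{k+1} = \frac{\varphi^{2k+1} + \beta^{2k+1} - (-1)^k}{5}.$$
Setting $B_k:=\varphi^{k+1/2}/\sqrt{5}$ so that $B_k^2=\varphi^{2k+1}/5$, we obtain $(2A_k-B_k)(2A_k+B_k)=(\beta^{2k+1}-(-1)^k)/5$; since $|\beta^{2k+1}|=\varphi^{-(2k+1)}<1$, the sign of the right-hand side is $(-1)^{k+1}$, so $2A_k>B_k$ for odd $k$ and $2A_k<B_k$ for even $k$.

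Next, Proposition~\ref{Lema1AreaProp}\eqref{Lema1AreaPropPart3} evaluates $\sum_{k=1}^n B_k$ in closed form, which rearranges as
$$\sum_{k=1}^n A_k - \frac{\sqrt{\varphi}}{2}(F_{n+2}-1) = \frac{1}{2}\sum_{k=1}^n(2A_k - B_k) + \frac{\beta^n - 1}{2\sqrt{5\varphi^3}}.$$
The tail term is straightforward: $|\beta|^n \le 1/\varphi$ for $n\ge 1$ implies $|\beta^n - 1| \le 1 + 1/\varphi = \varphi$, whence $|(\beta^n - 1)/(2\sqrt{5\varphi^3})| \le \varphi/(2\sqrt{5\varphi^3}) = 1/(2\sqrt{5\varphi})$.

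For the alternating sum, I use the factorisation $2A_k - B_k = (\beta^{2k+1}-(-1)^k)/[5(2A_k + B_k)]$ to show that the sequence $(|2A_k - B_k|)_{k\ge 1}$ is monotonically decreasing. Combined with the sign alternation established above, the standard alternating-series inequality then yields $\left|\sum_{k=1}^n (2A_k - B_k)\right| \le 2A_1 - B_1 = 1 - \varphi^{3/2}/\sqrt{5}$. Putting the two estimates together and simplifying (using $1 - \varphi^2 = -\varphi$) gives
$$\left|\sum_{k=1}^n A_k - \frac{\sqrt{\varphi}}{2}(F_{n+2}-1)\right| \le \frac{1 - \varphi^{3/2}/\sqrt{5}}{2} + \frac{1}{2\sqrt{5\varphi}} = \frac{1 - \sqrt{\varphi/5}}{2}.$$
The bound $(1 - \sqrt{\varphi/5})/2 \le 1/4$ is equivalent to $\varphi \ge 5/4$, which clearly holds.

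The main obstacle is rigorously establishing the monotone decrease of $|2A_k - B_k|$. Since $|\beta^{2k+1}-(-1)^k|$ equals $1 - \varphi^{-(2k+1)}$ for odd $k$ and $1 + \varphi^{-(2k+1)}$ for even $k$, this numerator itself oscillates, so the desired monotonicity is not visible from it alone; one must argue via a parity-based case analysis that the geometric growth of the denominator $5(2A_k + B_k)$, which behaves like $\sqrt{5}\,\varphi^{k+1/2}$, outpaces these oscillations at every step.
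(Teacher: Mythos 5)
Your overall strategy is sound and genuinely different from the paper's. The paper sandwiches each $2A_k$ between $\varphi^{k+1/2}/\sqrt{5}$ plus explicit correction terms, via the elementary inequality $1-(-\beta)^{1-\sgn(x)}x\le\sqrt{(1-x)(1+\beta^2x)}\le 1+\beta^{(\sgn(x)+3)/2}x$ applied at $x=(-\beta^2)^k$, and then sums the two correction series in closed form (Proposition \ref{Lema1AreaProp}, Parts \eqref{Lema1AreaPropPart1}--\eqref{Lema1AreaPropPart2}, and Corollary \ref{Lema1Area}) to land in the window $[-1/2,1/2]$ before halving. You instead use the exact identity $(2A_k)^2-B_k^2=(\beta^{2k+1}-(-1)^k)/5$ with $B_k=\varphi^{k+1/2}/\sqrt{5}$, which cleanly gives the sign alternation of $2A_k-B_k$, and then appeal to the alternating-series bound. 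Your identity, the rearrangement via Part \eqref{Lema1AreaPropPart3}, the tail estimate $|\beta^n-1|\le\varphi$, and the closing arithmetic $(1-\sqrt{\varphi/5})/2\le 1/4\iff\varphi\ge 5/4$ are all correct; numerically your bound is about $0.216$, comfortably inside $1/4$.

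However, as you yourself flag, the argument is incomplete: the alternating-series estimate requires the monotone decrease of $d_k:=|2A_k-B_k|$, which you assert but do not prove. This is a genuine gap, though a closable one, and it cannot be sidestepped by brute force: $\sum_{k\ge1}d_k\approx 0.26$, so replacing the alternating bound by $\sum_k d_k$ gives roughly $0.13+0.18>1/4$, meaning the cancellation really carries your proof. To close it, write $d_k=u_k/(5v_k)$ with $u_k=1+(-1)^{k+1}\varphi^{-(2k+1)}$ and $v_k=2A_k+B_k$. For even $k$ one has $u_{k+1}<u_k$ and $v_{k+1}>v_k$ simultaneously, so $d_{k+1}<d_k$ is immediate; the only delicate case is odd $k$, where one needs $v_{k+1}/v_k\ge u_{k+1}/u_k=(1+\varphi^{-(2k+3)})/(1-\varphi^{-(2k+1)})$. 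At $k=1$ the right side is $\approx 1.427$, and the easy estimate $v_{k+1}\ge\sqrt{F_{k+2}/F_k}\cdot 2A_k+\varphi B_k\ge\sqrt{2}\,v_k\approx 1.414\,v_k$ just misses it, so you must verify $k=1$ directly ($v_2/v_1\approx 1.512$) and use $F_{k+2}/F_k\ge 5/2$ for odd $k\ge 3$ (where the required ratio has already dropped below $1.05$). With that case analysis supplied, your proof is complete and arguably tighter than the paper's.
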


\begin{proof} 
We commence this proof by observing two crucial facts: 
firstly, we have
\begin{eqnarray*}
2A_k&=&\sqrt{F_kF_{k+1}}= \sqrt{\dfrac{\varphi^k-\beta^k}{\sqrt{5}}\times\dfrac{\varphi^{k+1}-\beta^{k+1}}{\sqrt{5}}}.
\end{eqnarray*}
 Since $\beta/\varphi =-\beta ^2$, we have that
\begin{eqnarray*}
2A_k&=&  \sqrt{\varphi^{2k+1}\dfrac{1-(-\beta^2)^k}{\sqrt{5}}\times\dfrac{1-(- \beta^2)^{k+1}}{\sqrt{5}}}\\
&=&\varphi^k \sqrt{\dfrac{\varphi}{5}}\sqrt{(1-(-\beta^2)^k)(1+\beta^2(-\beta^2)^{k})}\\
&=&\varphi^k \sqrt{\dfrac{\varphi}{5}}\sqrt{1+\beta (-\beta^2)^{k}- \beta^2 (\beta^2)^{2k} }.
\end{eqnarray*}

Secondly, if $-\varphi^2\le x\le 1$, then we have 
$$
1-(-\beta)^{1-\sgn(x)}x\le
\sqrt{1+\beta x-\beta x^2}= \sqrt{(1-x)(1+\beta^2 x)}\le 1+\beta^{(\sgn(x)+3)/2}x.
$$
Therefore, if we set $x=(-\beta^2)^k=(-\varphi^2)^{-k}$ and use the fact that $\varphi\beta=-1$, we have
$$
\varphi^k \sqrt{\dfrac{\varphi}{5}}\left(1-(-\beta)^{1-(-1)^k}(-\varphi^2)^{-k} \right)\le
2A_k\le \varphi^k \sqrt{\dfrac{\varphi}{5}}\left(1+\beta^{\frac{(-1)^k+3}{2}}(-\varphi^2)^{-k}\right)
$$
and that 
$$
\varphi^k \sqrt{\dfrac{\varphi}{5}}\left(1-\varphi^{(-1)^k-1}(-\varphi^2)^{-k} \right)\le
2A_k\le \varphi^k \sqrt{\dfrac{\varphi}{5}}\left(1+\varphi^{-\frac{3+(-1)^k}{2}}\varphi^{-2k}\right).
$$
Hence, we obtain
$$
\dfrac{\varphi^{k+1/2}-(-1)^k\varphi^{(-1)^k-1/2-k}}{\sqrt{5}} \le
2A_k\le  \dfrac{\varphi^{k+1/2}+\varphi^{-\frac{2+(-1)^k+2k}{2}}}{\sqrt{5}}.
$$
This, in combination with Corollary \ref{Lema1Area}, implies
$$
\sum_{k=1}^n \dfrac{(-1)^{k+1}\varphi^{(-1)^k-1/2-k}}{\sqrt{5}}\le \sum_{k=1}^n 2A_k-\sum_{k=1}^n\dfrac{\varphi^{k+1/2}}{\sqrt{5}}\le \sum_{k=1}^n \dfrac{\varphi^{-\frac{2+(-1)^k+2k}{2}}}{\sqrt{5}}.
$$
From this previous result, we conclude that 
$$
\dfrac{-1}{\sqrt{5 \varphi^5}}\le \sum_{k=1}^n 2A_k-\left(\sqrt{\varphi}(F_{n+2}-1)+\frac{\beta^n-1}{\sqrt{5\varphi^3}}\right)\le \dfrac{2+\varphi}{\sqrt{5\varphi^5}}. 
$$
Hence, 
$$-\frac{1}{2}\le 
\dfrac{-1}{\sqrt{5 \varphi^5}}+\dfrac{\beta^n-1}{\sqrt{5\varphi^3}}\le \sum_{k=1}^n 2A_k-\sqrt{\varphi}(F_{n+2}-1)\le \dfrac{2+\varphi}{\sqrt{5\varphi^5}}+\dfrac{\beta^n-1}{\sqrt{5\varphi^3}}\le \frac{1}{2}.
$$
This concludes the proof.
\end{proof}

The preceding theorem demonstrates that the initial external perimeter of the Fibonacci spiral is of order $(F_{n+2} -1)$. Our focus was on demonstrating the relationship between the sum of the first areas of the spiral and Fibonacci numbers. 

The following result represents an abstract generalization of Theorem \ref{RatioArea}, where the case $m=1$ corresponds to the statement of the theorem.

In this paper we use the most standard notation for $n$-th Lucas number, $L_n$, where  $L_0=2$, $L_1=1$, and $L_n=L_{n-1}+L_{n-2}$ for $n\geq 2$.
\begin{proposition}
Let 
$$A_{n,m}:=\dfrac{\sqrt{\displaystyle{\prod_{i=0}^m F_{n+i}}}}{2}.$$
Then for all $m>0$, 
$$\lim_{n\to \infty}\dfrac{A_{n+1,m}}{A_{n,m}}=\sqrt{\frac{1}{2}\left(\sqrt{5}F_{m+1}+L_{m+1}\right)},$$
\end{proposition}

\begin{proof} First of all we observe that 
$$
\frac{A_{n+1,m}}{A_{n,m}}=\sqrt{\frac{F_{n+1}F_{n+2}\cdots F_{n+m}F_{n+1+m}}{F_nF_{n+1}\cdots F_{n+m}}}={\sqrt{\frac{F_{n+m+1}}{F_n}}}.
$$
The identity  
$
2F_{a+b}=F_aL_b+F_bL_a,
$
with $a=n,~b=m+1$, gives that 
$$
\frac{F_{n+m+1}}{F_n}=\frac{1}{2}\left(L_{m+1}+\left(\frac{L_n}{F_n}\right)F_{m+1}\right).
$$
This imply that 
\begin{eqnarray*}
\lim_{n\to \infty} \frac{A_{n+1,m}}{A_{n,m}} =  {\sqrt{\frac{F_{n+m+1}}{F_n}}}
 =  {\sqrt{\frac{1}{2}\left(L_{m+1}+\left(\lim_{n\to\infty} \left(\frac{L_n}{F_n}\right)\right) F_{m+1}\right)}}. 
\end{eqnarray*}
The fact that $\lim_{n\to\infty} L_n/F_n={\sqrt{5}}$, completes the desired result. 
\end{proof}

\section{Perimeter of Fibonacci--Theodorus spiral}
In this section, we delve into the analysis of two primary aspects concerning the external perimeter of the Fibonacci spiral.
We use $S_n$ to denote the \emph{initial external perimeter} of the $n$-th Fibonacci spiral, which is formed with precisely $n$ triangles. 

\begin{theorem} \label{Spiral} Let $C=\varphi^2(1+\varphi^{-1/2})$. For $n\in\mathbb{Z}_{>1}$, then 
  $$\left|S_n-C\Big(\sqrt{F_n}-\frac{1}{\sqrt[4]{5}}\Big)\right|\le 1.$$ 
\end{theorem}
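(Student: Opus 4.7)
The plan is to interpret $S_n$ as the length of the outer zigzag boundary of the spiral, namely the sum of the opposite legs of its $n$ constituent triangles,
$$S_n = \sum_{k=1}^n \sqrt{F_k},$$
and then mirror the strategy of the preceding area theorem: replace $\sqrt{F_k}$ by its Binet leading term, evaluate the resulting geometric series in closed form, and carefully control the error.

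First, using Binet together with $\varphi\beta=-1$ (so $\beta^k=(-1)^k\varphi^{-k}$), I write
$$\sqrt{F_k} = \frac{\sqrt{\varphi^k - \beta^k}}{\sqrt[4]{5}} = \frac{\varphi^{k/2}}{\sqrt[4]{5}} + b_k,$$
with the defect $b_k$ given in closed form by rationalizing the numerator,
$$b_k = \frac{-\beta^k}{\sqrt[4]{5}\bigl(\sqrt{\varphi^k - \beta^k} + \varphi^{k/2}\bigr)}.$$
Since $|\beta|^k=\varphi^{-k}$ and both summands in the denominator are comparable to $\varphi^{k/2}$ (trivially for odd $k$, where $\sqrt{\varphi^k-\beta^k}\ge\varphi^{k/2}$; and for even $k\ge 2$ via $\sqrt{\varphi^k-\beta^k}\ge\sqrt{1-\varphi^{-4}}\,\varphi^{k/2}$), one obtains a clean geometric bound $|b_k|\le M\varphi^{-3k/2}$ with an explicit small constant $M$, so $\sum_{k\ge 1}|b_k|$ is a tiny explicit number.

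Next, Kepler's identity $\varphi-1=\varphi^{-1}$ gives $1/(\varphi^{1/2}-1)=\varphi^{3/2}+\varphi$ (multiply both sides by $\varphi^{1/2}+1$), which lets the main term be evaluated exactly:
$$\sum_{k=1}^n \frac{\varphi^{k/2}}{\sqrt[4]{5}} = \frac{\varphi^{1/2}(\varphi^{n/2}-1)}{\sqrt[4]{5}(\varphi^{1/2}-1)} = \frac{(\varphi^2+\varphi^{3/2})(\varphi^{n/2}-1)}{\sqrt[4]{5}} = \frac{C(\varphi^{n/2}-1)}{\sqrt[4]{5}}.$$
A short calculation then shows that this closed form differs from $C(\sqrt{F_n}-1/\sqrt[4]{5})$ by precisely $-Cb_n$, which is governed by the same geometric estimate as above.

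Putting the pieces together,
$$\Bigl|S_n - C\bigl(\sqrt{F_n}-\tfrac{1}{\sqrt[4]{5}}\bigr)\Bigr| \;\le\; \sum_{k=1}^n|b_k| + C|b_n|,$$
a sum of two geometrically decaying tails. The main obstacle is purely quantitative: verifying that this combined bound stays at most $1$ for every $n\ge 2$. Since the first tail is bounded by its infinite-series value and the second is maximized at $n=2$, the check reduces to a single elementary estimate using the refined bound on $|b_k|$ described above together with the explicit value of $C=\varphi^2+\varphi^{3/2}$; with the $k$-even and $k$-odd denominator estimates kept separate, the numerical verification is tight enough to fall safely below $1$.
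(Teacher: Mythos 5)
Your proposal is correct and takes essentially the same route as the paper's proof: both read $S_n=\sum_{k=1}^n\sqrt{F_k}$, split each $\sqrt{F_k}$ into its Binet leading term $\varphi^{k/2}/\sqrt[4]{5}$ plus a geometrically small defect, evaluate $\sum_{k=1}^n\varphi^{k/2}=C(\varphi^{n/2}-1)$ via Kepler's identity, and then trade $\varphi^{n/2}$ back for $\sqrt{F_n}$ at a cost controlled by the same kind of estimate. Your exact-defect bookkeeping (the identity $S_n-C(\sqrt{F_n}-5^{-1/4})=\sum_{k=1}^n b_k-Cb_n$) is a slightly cleaner version of the paper's two applications of $1\pm|x|$ bounds, and the resulting numerical margin (about $0.33+0.39<1$) checks out.
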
 

\begin{proof} 
The geometric sum $\sum_{i=1}^n{\varphi^{i/2}}$ simplifies as follows:   
\begin{equation}\label{GeometricSum}
 \sum_{i=1}^n{\varphi^{i/2}}=\dfrac{\varphi^{1/2}(\varphi^{n/2}-1)}{\varphi^{1/2}-1}=\varphi^{1/2}(\varphi^{1/2}+1)\dfrac{\varphi^{1/2}(\varphi^{n/2}-1)}{\varphi-1}
=\varphi^2(1+\varphi^{-1/2})(\varphi^{n/2}-1).   
\end{equation}

On the other hand, from the Binet formula, we have the following for every
$i\in\mathbb{N}$:  
    $$
    \sqrt{F_{i}}=\sqrt{\dfrac{\varphi^i-\beta^i}{\sqrt{5}}}=\sqrt{\dfrac{\varphi^i}{\sqrt{5}}}\left(1-\left(\dfrac{-1}{\varphi^2}\right)^i\right)^{1/2}. 
    $$
This implies that
\begin{equation}\label{RootBounds}
\sqrt{\dfrac{\varphi^i}{\sqrt{5}}}\left(1-\left(\dfrac{1}{\varphi^2}\right)^i\right)
\le \sqrt{F_i}\le \sqrt{\dfrac{\varphi^i}{\sqrt{5}}}\left(1+\left(\dfrac{1}{\varphi^2}\right)^i\right).  
\end{equation}
Summing up the previous inequality and then simplifying, we obtain  
$$\sum_{i=1}^n{\dfrac{\varphi^{i/2}}{\sqrt[4]{5}}}-\sum_{i=1}^n \dfrac{\varphi^{-3/2i}}{\sqrt[4]{5}}
\le  \sum_{i=1}^n \sqrt{F_i}\le \sum_{i=1}^n{\dfrac{\varphi^{i/2}}{\sqrt[4]{5}}}+\sum_{i=1}^n \dfrac{\varphi^{-3/2i}}{\sqrt[4]{5}}.$$ 
Hence, 
$$-\sum_{i=1}^n \dfrac{\varphi^{-3/2i}}{\sqrt[4]{5}}
\le \sum_{i=1}^n \sqrt{F_i} -\sum_{i=1}^n{\dfrac{\varphi^{i/2}}{\sqrt[4]{5}}}\le \sum_{i=1}^n \dfrac{\varphi^{-3/2i}}{\sqrt[4]{5}}.$$
By further simplification, using the geometric sum, we get
\begin{eqnarray*}
-\dfrac{1}{\sqrt[4]{5}(\varphi^{3/2}-1)}\le -\dfrac{1-\varphi^{-3/2n}}{\sqrt[4]{5}(\varphi^{3/2}-1)}
&\le& \sum_{i=1}^n \sqrt{F_i} -\sum_{i=1}^n{\dfrac{\varphi^{i/2}}{\sqrt[4]{5}}}\\ 
&\le& \dfrac{1-\varphi^{-3/2n}}{\sqrt[4]{5}(\varphi^{3/2}-1)} \le \dfrac{1}{\sqrt[4]{5}(\varphi^{3/2}-1)}.
\end{eqnarray*}

This, together with \eqref{GeometricSum}, implies  
$$-\dfrac{1}{\sqrt[4]{5}(\varphi^{3/2}-1)}
\le \sum_{i=1}^n \sqrt{F_i} -\frac{\varphi^2(1+\varphi^{-1/2})(\varphi^{n/2}-1)}{\sqrt[4]{5}} \le \dfrac{1}{\sqrt[4]{5}(\varphi^{3/2}-1)}.$$

Now, let us provide some bounds for 
 $\varphi^2(1+\varphi^{-1/2})(\varphi^{n/2}-1)$
in terms of $F_n$ and $\varphi$. 
We start bounding $\varphi^{n/2}$. From the fact that $\varphi^n=\sqrt{5}F_n+ (-\varphi^{-2})^n$, we have:   
$$
\varphi^{n/2}=\sqrt{\sqrt{5}F_{n}+(-\varphi^{-2})^{n}}= \sqrt{\sqrt{5}F_n}\left( 1+\dfrac{1}{\sqrt{5}(-\varphi^2)^nF_n}\right)^{1/2}.
$$
Using the fact that 
$1-|x|\le \sqrt{1+x}\le 1+|x|$, with $x=\dfrac{1}{\sqrt{5}(-\varphi^2)^nF_n}$, we can imply that    
$$
\sqrt{\sqrt{5}F_n}-\dfrac{1}{\sqrt{5F_n}\varphi^{2n}} \le \varphi^{n/2} \le  \sqrt{\sqrt{5}F_n}+\dfrac{1}{\sqrt{5F_n}\varphi^{2n}}.
$$

Subtracting $1$ in all parts to this inequality and then in multiplying all part of this previous inequality by 
$(\varphi^2(1+\varphi^{-1/2}))/(\sqrt[4]{5})$,  
then we can establish the following bounds:
$$
T_n-C_n\le \frac{\varphi^2(1+\varphi^{-1/2})(\varphi^{n/2}-1)}{\sqrt[4]{5}} \le T_n+C_n, 
$$
where  
$$C_n:=\frac{\varphi^2(1+\varphi^{-1/2})}{\sqrt[4]{5}\sqrt{5F_n}\varphi^{2n}} \quad \text {and} \quad T_n:= \varphi^2(1+\varphi^{-1/2})(\sqrt{F_n}-(\sqrt[4]{5})^{-1}).$$
It is worth to note that $0< C_n \le 0.21$ for $n>1$. Moreover, $C_n$ converges to zero. Therefore, for $n>1$, this gives us  
$$
-\left(\dfrac{1}{\sqrt[4]{5}(\varphi^{3/2}-1)}
+C_n\right)\le \sum_{i=1}^n \sqrt{F_i} - \varphi^2(1+\varphi^{-1/2})(\sqrt{F_n}-(\sqrt[4]{5})^{-1})\le \dfrac{1}{\sqrt[4]{5}(\varphi^{3/2}-1)}+C_n.
$$
This completes the proof. 
\end{proof}

The preceding theorem demonstrates that the initial external perimeter of the Fibonacci spiral is of order $\Big(\sqrt{F_n} -\frac{1}{\sqrt[4]{5}}\Big)$. Our focus was on demonstrating the relationship between the initial external perimeter and Fibonacci numbers. Nevertheless, a more precise approximation can be achieved by employing the Taylor series, as expressed by:
$$\left(1-\left(\dfrac{-1}{\varphi^2}\right)^i\right)^{1/2}= \sum_{n=0}^{\infty}(-1)^n \binom{1/2}{ n}\left(\dfrac{-1}{\varphi^2}\right)^{in}.$$ 
However, we defer the details to the interested reader.

\subsection{Asymptotic behaviors in the Fibonacci-Theodorus spiral} The first result arises from the observation that by taking three segments of the Fibonacci spiral---not necessarily consecutive but with exactly one segment between them---we can obtain an identity that, in a certain sense, mimics the definition of Fibonacci numbers. For instance, in the fourth, sixth, and eighth segments on the left-hand side of Figure \ref{FibonacciTheodorusSpiral}, the lengths of these segments are $\sqrt{3}$, $\sqrt{8}$, and $\sqrt{21}$. They satisfy this equality:
$\lfloor \sqrt{3} + \sqrt{8} \rfloor = \lfloor \sqrt{21} \rfloor$.

\begin{proposition} \label{RootIdentityR}
Let $n$ be a non-negative integer $n\neq 8$. Then 
 $$\left\lfloor \sqrt{F_{n+4}}\right\rfloor =\left\lfloor \sqrt{F_{n+2}}+\sqrt{F_n}\right\rfloor.$$
\end{proposition}

\begin{proof} It is easy to verify that the result holds for $n<8$. 
Let  
$$
\delta_n:={\sqrt{F_{n+4}}}-({\sqrt{F_{n+2}}}+{\sqrt{F_n}}), \qquad x_n:={\sqrt{F_{n+4}}}+{\sqrt{F_{n+2}}}+{\sqrt{F_n}},
$$
and 
$$
 y_n:=F_{n+1}+{\sqrt{F_nF_{n+2}}}.
$$
Then
\begin{eqnarray*}
\delta_n x_n  & = & \left({\sqrt{F_{n+4}}}-({\sqrt{F_{n+2}}}+{\sqrt{F_n}})\right)({\sqrt{F_{n+4}}}+({\sqrt{F_{n+2}}}+{\sqrt{F_n}})) \\
& = & F_{n+4}-({\sqrt{F_{n+2}}}+{\sqrt{F_n}})^2\\
& = & F_{n+4}-(F_{n+2}+F_n+2{\sqrt{F_{n+2}F_n}})\\
& = & (F_{n+3}+F_{n+2}-F_{n+2}-F_n)-2{\sqrt{F_{n+2}F_n}}\\
& = & F_{n+2}+F_{n+1}-F_n-2{\sqrt{F_{n+2}F_n}}\\
& = & 2F_{n+1}-2{\sqrt{F_{n+2}F_n}}.
\end{eqnarray*}
Therefore, 
$$
\delta_n x_n y_n=2(F_{n+1}-{\sqrt{F_{n+2}F_n}})(F_{n+1}+{\sqrt{F_{n+2}F_n}})=2(F_{n+1}^2-F_{n+2}F_n)=2(-1)^n.
$$
So,  
\begin{equation}\label{RootEqua}
\delta_n=\frac{2(-1)^n}{x_n y_n}.
\end{equation}
Using the fact that for $n\ge 1$, 
$$
\varphi^{n-2}\le F_n\le \varphi^{n-1}, 
$$
we have 
\begin{eqnarray*}
x_n & = & {\sqrt{F_{n+4}}}+{\sqrt{F_{n+2}}}+{\sqrt{F_n}}>\varphi^{(n+2)/2}+\varphi^{n/2}+\varphi^{(n-2)/2}\\
& = & \varphi^{(n-2)/2}(\varphi^2+\varphi+1)=2\varphi^{(n+2)/2},
\end{eqnarray*}
while 
$$
y_n=F_{n+1}+{\sqrt{F_{n+2}F_n}}>\varphi^{n-1}+\varphi^{(n+n-2)/2}=2\varphi^{n-1}.
$$

This and \eqref{RootEqua} imply 
$$
|\delta_n|<\frac{2}{(2^2\varphi^{(n+2)/2} \varphi^{n-1})}=\frac{1}{2\varphi^{3n/2}}.
$$

To complete the proof that $\left\lfloor \sqrt{F_{n+4}}\right\rfloor =\left\lfloor \sqrt{F_{n+2}}+\sqrt{F_n}\right\rfloor$, we now consider two cases. 
We prove the case in which $n$ is even (the odd case is similar and we omit it).  The only way in which the previous equality does not hold is when the left--hand side is bigger than the right--hand side. But then, putting $k_n:=\left\lfloor \sqrt{F_{n+4}}\right\lfloor$, we would get 
$$
{\sqrt{F_{n+4}}}=k_n+\zeta_n\quad {\text{\rm and }}\quad {\sqrt{F_{n+2}}}+{\sqrt{F_n}}=(k_n-1)+\eta_n\quad {\text{\rm with }}\quad \zeta_n,\eta_n\in (0,1).
$$
Thus, 
$$
\delta_n=\zeta_n+(1-\eta_n)=\frac{2}{x_ny_n},
$$
which shows that 
$$
0<\zeta_n<\delta_n<\frac{2}{x_n y_n}<\frac{1}{2\varphi^{3n/2}}.
$$
On the other hand, let us write 
$$
F_{n+4}=k_n^2+w_n,\qquad {\text{\rm where}}\qquad 1\le w_n\le 2k_n.
$$
It is worth to note that $F_{12}=12^2$ is the largest Fibonacci that is a prefect square. Thus, $F_{n+4}$ is  not a perfect square when $n>8$. Therefore, $w_n\ne 0$. 
So, 
$$
\zeta_n={\sqrt{F_{n+4}}}-k_n={\sqrt{k_n^2+w_n}}-k_n=\frac{w_n}{k_n+{\sqrt{k_n^2+w_n}}}>\frac{1}{2k_n+1}.
$$
This implies that 
$$
\frac{1}{2k_n+1}<\zeta_n<\delta_n<\frac{1}{2\varphi^{3n/2}},
$$
it follows that  
$$
2\varphi^{3n/2}<2k_n+1<2{\sqrt{F_{n+4}}}+1<2\varphi^{(n+3)/2}+1,
$$
or equivalently, 
$$
2\varphi^{(n+3)/2} (\varphi^{(2n-3)/2}-1)<1.
$$
That is a contradiction for $n>8$, because all three factors ---$\varphi^{(n+3)/2}$, $~\varphi^{(2n-3)/2}-1$, and $2$--- on the left--hand side are greater than $1$, and therefore so is their product.  
\end{proof}

\begin{corollary}
 \[\lim_{n\to \infty} \frac{\sqrt{F_{n+4}}}{\sqrt{F_{n+2}}+\sqrt{F_n}}=1.\]
\end{corollary}

\begin{proof} Using the notation given in the proof of Proposition \ref{RootIdentityR} we have
$$
\frac{\sqrt{F_{n+4}}}{{\sqrt{F_{n+2}}}+{\sqrt{F_n}}}=\frac{\sqrt{F_{n+4}}}{{\sqrt{F_{n+4}}}-\delta_n}=\frac{1}{1-(\delta_n/{\sqrt{F_{n+4}}})}.
$$
Since $\delta_n\to 0$ and ${\sqrt{F_{n+4}}}\to\infty$, the right--hand side above tends to $1$. 
\end{proof}

\begin{proposition}
Let $P_{n}=\sqrt{F_{n}}+\sqrt{F_{n+1}}+\sqrt{F_{n+2}}$ be the sequence of perimeters of the triangles $\Delta_n$ for $n\ge 1$. Then  
$\lim_{n\to \infty}P_{n+1}/P_{n}=\sqrt{\varphi}.$
\end{proposition}

\begin{proof} It is easy to see that 
\begin{eqnarray*}
\frac{P_{n+1}}{P_n}  & = & \frac{{\sqrt{F_{n+1}}}+{\sqrt{F_{n+2}}}+{\sqrt{F_{n+3}}}}{{\sqrt{F_n}}+{\sqrt{F_{n+1}}}+{\sqrt{F_{n+2}}}}\\
& = & {\sqrt{\frac{F_{n+1}}{F_n}}}\cdot \frac{1+{\sqrt{F_{n+2}/F_{n+1}}}+{\sqrt{F_{n+3}/F_{n+1}}}}{1+{\sqrt{F_{n+1}/F_n}}+{\sqrt{F_{n+2}/F_n}}}.
\end{eqnarray*}
Since $\lim_{n\to\infty} F_{n+1}/F_n=\varphi$, we have 
$$\lim_{n\to \infty}P_{n+1}/P_{n} = {\sqrt{\varphi}} \cdot \frac{1+{\sqrt{\varphi}}+{\sqrt{\varphi^2}}}{1+{\sqrt{\varphi}}+{\sqrt{\varphi^2}}}={\sqrt{\varphi}}.$$
This completes the proof
\end{proof}

In classic geometry, we know that the \emph{centroid of a triangle} is the point at which the three medians intersect. Let $C_n$ be the centroid of the  triangle $\Delta_n$. In Figure \ref{centroida}, we show the centroids for the first triangles in the Fibonacci spiral.  For $n\geq 2$, let $d_n$ be the sum of the distances between the points $C_1, C_2, \dots, C_n$. Denote by $c_i$ the distance between $C_{i-1}$ and $C_i$. 
We can verify that $$d_2=\frac{\sqrt{2+\sqrt{2}}}{3}, \quad d_3=\frac{1}{9} \bigg(3 \sqrt{2+\sqrt{2}}+\sqrt{3 \left(9+4 \sqrt{3}\right)}\bigg).$$

  \begin{figure}[ht]
\centering
 \includegraphics[scale = 0.5]{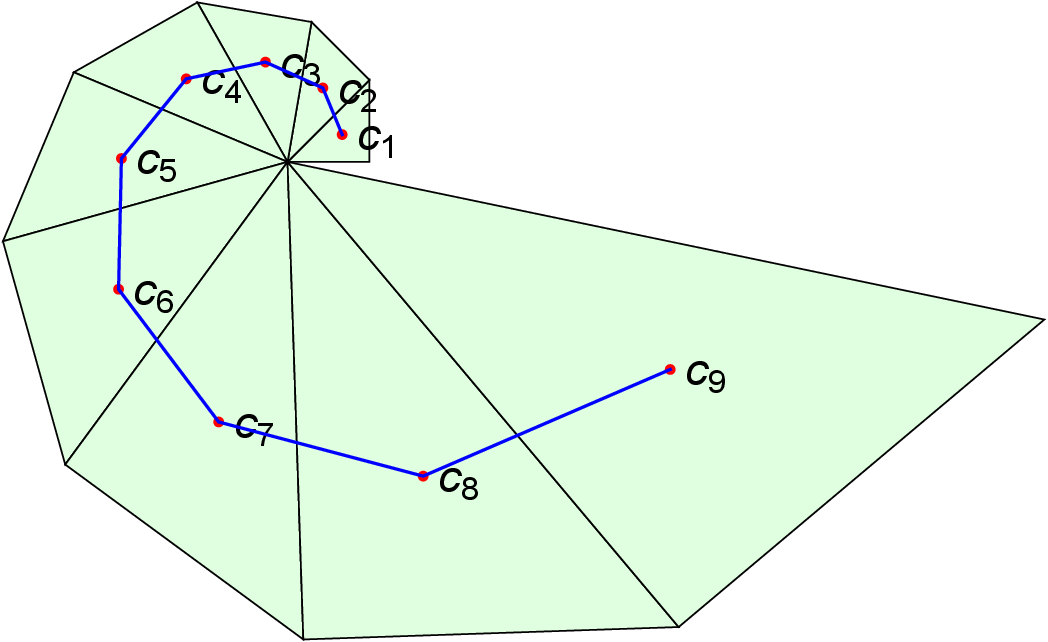} 
	\caption{Centroids in the Fibonacci spiral.} 
	 \label{centroida}
\end{figure}

\begin{lemma} \label{Problem5} If $n\ge 2$, then 
$$c_n=\frac{1}{3} {\sqrt{F_{n+1}+2F_n{\sqrt{F_{n-1}/F_{n+1}}}}}.$$ 
\end{lemma}

\begin{proof}
Using the notation from Figure \ref{distanceConsecutivecentroids}, let $M_1$ and $M_2$ represent the midpoints of $BC$ and $BD$, respectively. Additionally, $G_1$ and $G_2$ are the centroids of triangles $\triangle ABC$ and $\triangle ABD$.

Since the ratios $AG_1/AM_1$ and  $AG_2/AM_2$ are both equal to $2/3$, we conclude that $G_1G_2$ is parallel to $M_1M_2$, and the triangles $\triangle AG_1G_2$ and $\triangle AM_1M_2$ are similar. Specifically, we have $G_1G_2/M_1M_2 = 2/3$. To find $M_1M_2$, we apply the cosine theorem to triangle $\triangle AM_1M_2$, where the relevant data are given by
$$
BM_1=BC/2={\sqrt{F_{n-1}}}/2,~BM_2=BD/2={\sqrt{F_n}}/2,
$$
and 
$$\cos(\angle CBD)=\cos(90^\circ+\angle CBA)=-\sin(\angle CBA)=-AC/AB=-\sqrt{F_n/F_{n+1}}.$$ 
Using these values, we calculate $M_1M_2^2$ as follows:
\begin{eqnarray*}
M_1M_2^2 & = & BM_1^2+BM_2^2-2BM_1BM_2\cos(\angle CBD)\\
& = & F_{n-1}/4+F_n/4+2({\sqrt{F_{n-1}}}/2)({\sqrt{F_n}}/2) {\sqrt{F_n/F_{n+1}}}\\
& = & (F_{n-1}+F_n)/4+(F_n/2){\sqrt{F_{n-1}/F_{n+1}}}\\
& = & F_{n+1}/4+(F_n/2){\sqrt{F_{n-1}}/F_{n+1}}.
\end{eqnarray*}
Thus, 
$$M_1M_2=\frac{1}{2}\sqrt{F_{n+1}+2F_n {\sqrt{F_{n-1}/F_{n+1}}}}.
$$
Finally, since $c_n=G_1G_2=(2/3)M_1M_2$, the desired formula follows.
\end{proof}

  \begin{figure}[ht]
\centering
 \includegraphics[scale = 1.5]{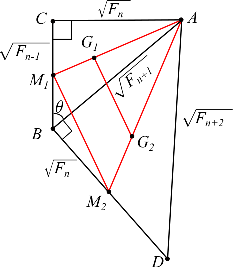} 
	\caption{Distance between to consecutive centroids.} 
	 \label{distanceConsecutivecentroids}
\end{figure}

\begin{theorem} \label{Problem5} If $n\ge 2$, then 
  $\lim_{n\to \infty}d_{n+1}/d_{n}=\sqrt{\varphi}$.
\end{theorem}

\begin{proof}
From Lemma \ref{Problem5} we know 
\begin{equation*}
\label{eq:cn}
c_n=\frac{1}{3} {\sqrt{F_{n+1}+2F_n{\sqrt{F_{n-1}/F_{n+1}}}}}
\end{equation*}
for $n\ge 2$. For $n=2, 3$ we get
$$
c_2=\frac{1}{3}{\sqrt{F_3+2F_2{\sqrt{F_1/F_3}}}}=\frac{1}{3}{\sqrt{2+{\sqrt{2}}}},
$$
and 
\begin{eqnarray*}
c_3 & = & \frac{1}{3}{\sqrt{F_4+2F_3{\sqrt{F_2/F_4}}}}\\
& = & \frac{1}{3}{\sqrt{3+2\cdot 2{\sqrt{1/3}}}}=\frac{1}{3}{\sqrt{3+4/{\sqrt{3}}}}\\
& =& \frac{1}{3{\sqrt{3}}}{\sqrt{3{\sqrt{3}}+4}}=\frac{\sqrt{4{\sqrt{3}}+9}}{9}.
\end{eqnarray*}
Using formula \eqref{eq:cn} we get the asymptotic 
\begin{eqnarray*}
c_n & =&  \frac{1}{3} {\sqrt{\frac{\varphi^{n+1}}{\sqrt{5}}(1+o(1))+2\frac{\varphi^n}{{\sqrt{5}}} {\sqrt{\frac{1}{\varphi^2}}}(1+o(1))}}\\
& = & \frac{1}{3} {\sqrt{\frac{\varphi^{n+1}}{{\sqrt{5}}}(1+o(1))+\frac{2\varphi^{n-1}}{{\sqrt{5}}}(1+o(1))}}\\
& = & \frac{{\sqrt{\varphi^2+2}}}{3\cdot 5^{1/4}}\varphi^{(n-1)/2}(1+o(1)).
\end{eqnarray*}
The $o(1)$ above tends to zero exponentially. So, it would seem that 
\begin{eqnarray*}
d_n & = & \sum_{2\le m\le n} c_m=\sum_{{\sqrt{n}}\le m\le n} \frac{{\sqrt{\varphi^2+2}}}{3\cdot 5^{1/4}}\varphi^{(m-1)/2}(1+o(1))+O(\varphi^{\sqrt{n}})\\
& = & \frac{{\sqrt{\varphi^2+2}}}{3\cdot 5^{1/4}({\sqrt{\varphi}}-1)} \varphi^{n/2} (1+o(1)).
\end{eqnarray*}
This asymptotic implies the statement about the limit of $d_{n+1}/d_n$ when $n$ tends to infinity. 
\end{proof}

\section{Proof of Hahn's conjecture}\label{S-hahn}

In this section, we introduce a specific concept referred to as the $n$-th \emph{Hahn's area}, denoted by $\Hn(n)$. This designation pays tribute to Hahn, who initially introduced this concept in \cite{Hahn}. As illustrated in Figure \ref{centroid}, this concept involves the area of $F_{n}$ triangles within the Theodorus spiral, starting from the $(F_{n+1})$-th triangle. Thus, $\Hn(n)=\sum_{i=0}^{F_n-1} \sqrt{F_{n+1}+i}/2$. 

For instance, to find $\Hn(5)$, we sum the areas of the $(F_6)$-th, $(F_6+1)$-th, $(F_6+2)$-th, $(F_6+3)$-th, and $(F_6+F_5-1)$-th triangles in the Theodorus spiral (see  Figure  \ref{centroid}). Since one leg has a length equal to one and the other leg is the square root of a natural number, these areas, denoted by $M_i$, are $M_8=\sqrt{F_6}/2$, $M_9=\sqrt{F_6+1}/2$, $M_{10}=\sqrt{F_6+2}/2$, $M_{11}=\sqrt{F_6+3}/2$, and $M_{12}=\sqrt{F_6+F_5-1}/2$. 

It is worth noting that Figure \ref{centroid} does not correspond to the Fibonacci-Theodorus spiral, it corresponds to the Spiral of Theodorus.

\begin{figure}[ht]
\centering
  \includegraphics[scale = 0.45]{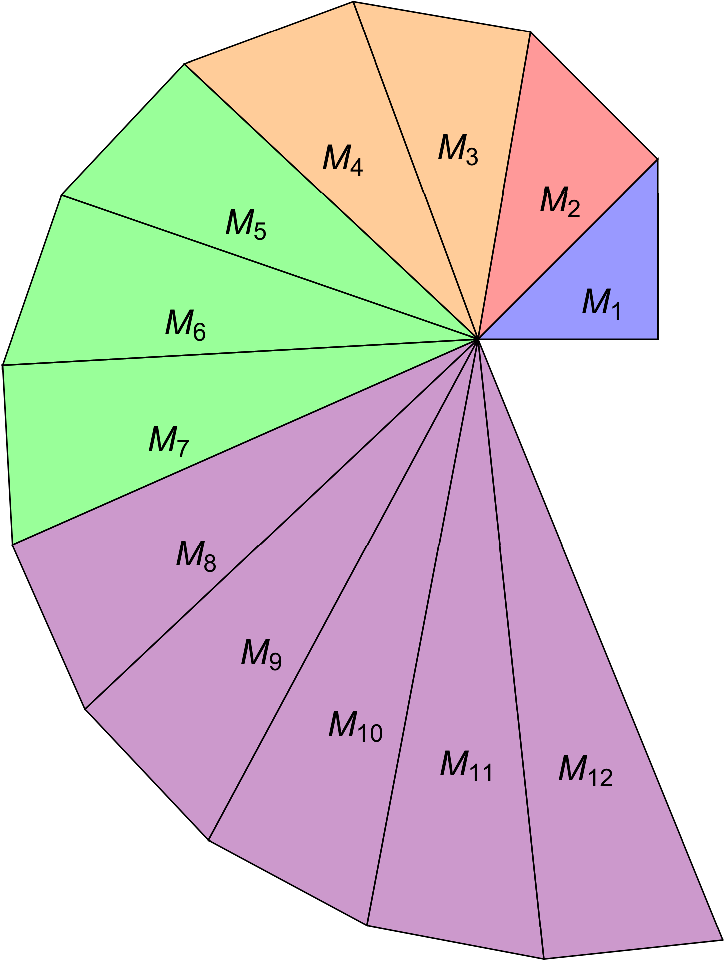}
	\caption{Hahn's conjecture.} 
	 \label{centroid}
\end{figure}

Hahn, in his work \cite{Hahn}, proposed an interesting conjecture concerning the ratio of two consecutive Hahn areas. He suggested that this ratio converges to $\varphi\sqrt{\varphi}$. More precisely,  he proposed that 
$\lim_{n\to \infty} \Hn(n)/\Hn(n-1)=\varphi\sqrt{\varphi}=\sqrt{2+\sqrt{5}}$.

We now prove Hahn's conjecture. First, we introduce the $n$-th \emph{harmonic number} of order $p$ as 
\begin{equation}\label{HarmonicNumber}
  H_{m,p}:=\sum_{k=1}^{m} \frac{1}{k^p}.  
\end{equation}

\begin{theorem} If $n \in \mathbb{Z}$, then 
$$\lim_{n\to \infty} \frac{\Hn(n)}{\Hn(n-1)}=\varphi\sqrt{\varphi}.$$    
\end{theorem}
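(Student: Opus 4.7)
The plan is to recognize that $\Hn(n)$ is a sum of consecutive square roots, approximate it by an integral with a controlled error, and then use Binet's formula to read off the limit.

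First, I would rewrite the definition. Since $i$ runs from $0$ to $F_n-1$ and $F_{n+1}+F_n-1=F_{n+2}-1$, we have
\begin{equation*}
    \Hn(n)=\frac{1}{2}\sum_{k=F_{n+1}}^{F_{n+2}-1}\sqrt{k}.
\end{equation*}
The function $\sqrt{x}$ is increasing, so comparing the sum with the integrals of $\sqrt{x}$ over $[k-1,k]$ and $[k,k+1]$ yields
\begin{equation*}
\int_{F_{n+1}-1}^{F_{n+2}-1}\!\!\sqrt{x}\,dx\;\le\;\sum_{k=F_{n+1}}^{F_{n+2}-1}\sqrt{k}\;\le\;\int_{F_{n+1}}^{F_{n+2}}\!\!\sqrt{x}\,dx.
\end{equation*}
Both bounds equal $\tfrac{2}{3}\bigl(F_{n+2}^{3/2}-F_{n+1}^{3/2}\bigr)$ up to an additive term of size $O(\sqrt{F_{n+2}})$, so after multiplying by $1/2$ we obtain
\begin{equation*}
    \Hn(n)=\frac{1}{3}\bigl(F_{n+2}^{3/2}-F_{n+1}^{3/2}\bigr)+E_n,\qquad |E_n|=O(\sqrt{F_{n+2}}).
\end{equation*}

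Next, I would invoke Binet's formula $F_n=(\varphi^n-\beta^n)/\sqrt{5}$ with $|\beta|<1$ to write
\begin{equation*}
    F_n^{3/2}=\frac{\varphi^{3n/2}}{5^{3/4}}\Bigl(1-(\beta/\varphi)^n\Bigr)^{3/2}=\frac{\varphi^{3n/2}}{5^{3/4}}\bigl(1+O(\varphi^{-2n})\bigr).
\end{equation*}
Substituting this into the approximation for $\Hn(n)$ gives the leading asymptotic
\begin{equation*}
    \Hn(n)=\frac{\varphi^{3(n+1)/2}}{3\cdot 5^{3/4}}\bigl(\varphi^{3/2}-1\bigr)+O(\varphi^{n/2}),
\end{equation*}
since $\sqrt{F_{n+2}}=O(\varphi^{n/2})$ is dominated by the leading term of order $\varphi^{3n/2}$.

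Finally, forming the quotient $\Hn(n)/\Hn(n-1)$, the constant prefactor $\bigl(\varphi^{3/2}-1\bigr)/(3\cdot 5^{3/4})$ cancels, the lower-order $O(\varphi^{n/2})$ contributions are negligible compared with the $\varphi^{3n/2}$ main terms, and the ratio tends to $\varphi^{3/2}=\varphi\sqrt{\varphi}$. To match Hahn's stated value I would note that Kepler's identity $\varphi^{2}=\varphi+1$ gives $\varphi^{3}=\varphi^{2}+\varphi=2\varphi+1=2+\sqrt{5}$, so $\varphi\sqrt{\varphi}=\sqrt{2+\sqrt{5}}$. The main obstacle is not conceptual but bookkeeping: one must verify explicitly that the Riemann-sum error $O(\sqrt{F_{n+2}})$ and the Binet correction $O(\varphi^{-2n})$ are both of strictly smaller order than the main term $\varphi^{3n/2}$, so that they vanish in the limit of the ratio.
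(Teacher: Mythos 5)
Your proposal is correct and follows essentially the same route as the paper: both bound $2\Hn(n)=\sum_{k=F_{n+1}}^{F_{n+2}-1}\sqrt{k}$ between $\tfrac{2}{3}\bigl((F_{n+2}-1)^{3/2}-(F_{n+1}-1)^{3/2}\bigr)$ and $\tfrac{2}{3}\bigl(F_{n+2}^{3/2}-F_{n+1}^{3/2}\bigr)$ — the paper by citing Chlebus's partial-sum inequality for the $p$-series, you by the equivalent integral comparison — and then extract the limit $\varphi^{3/2}$. The only cosmetic difference is that the paper applies the squeeze theorem directly to the ratio of these bounds using $F_{n+1}/F_{n+2}\to\varphi^{-1}$, whereas you pass through an explicit Binet asymptotic for $\Hn(n)$; both steps are routine and valid.
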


\begin{proof} 
From \cite{Chlebus} we know that 
\begin{equation}\label{HarmonicInequality}
    1+\dfrac{m^{1-p}-1}{1-p}<H_{m,p} <1+\dfrac{(m+1)^{1-p}-1}{1-p}.
\end{equation}
From \eqref{HarmonicNumber}, we can observe that 
$H_{F_{n+2}-1,{-1/2}}-H_{F_{n+1}-1,{-1/2}}=2 \Hn(n)$. 
We substitute these values into
\eqref{HarmonicInequality}. Thus, we first substitute $p=-1/2$ and $m=F_{n+1}-1$ and next we substitute $p=-1/2$ and $m=F_{n+2}-1$. This gives rise to the  inequality: 
$$
\dfrac{(F_{n+2}-1)^{3/2}-(F_{n+1}-1)^{3/2}}{3}< \Hn(n) < \dfrac{F_{n+2}^{3/2}-F_{n+1}^{3/2}}{3}.
$$
This implies   
$$
  \dfrac{(F_{n+3}-1)^{3/2}-(F_{n+2}-1)^{3/2}}{F_{n+2}^{3/2}-F_{n+1}^{3/2}}<\dfrac{\Hn(n+1)}{\Hn(n)}<\dfrac{F_{n+3}^{3/2}-F_{n+2}^{3/2}}{(F_{n+2}-1)^{3/2}-(F_{n+1}-1)^{3/2}}. 
$$
Hence, 
\begin{multline*}
\left(\frac{F_{n+3}}{F_{n+2}}\right)^{3/2}\frac{\left(1+\frac{1}{F_{n+3}}\right)^{3/2}-\left(\frac{F_{n+2}}{F_{n+3}}-\frac{1}{F_{n+3}}\right)^{3/2}}{1-\left(\frac{F_{n+1}}{F_{n+2}}\right)^{3/2}}<\frac{\Hn(n+1)}{\Hn(n)}\\<
\left(\frac{F_{n+3}}{F_{n+2}}\right)^{3/2}\dfrac{1-\left(\frac{F_{n+2}}{F_{n+3}}\right)^{3/2}}{\left(1-\frac{1}{F_{n+2}}\right)^{3/2}-\left(\frac{F_{n+1}}{F_{n+2}}-\frac{1}{F_{n+2}}\right)^{3/2}}.
\end{multline*}
Now, using the squeeze theorem and letting $n$ tend to infinity, we can see that the middle part of the inequality converges to $\varphi^{3/2}$. This completes the proof. 
\end{proof}

 Consider the sequence 
$$T_{n,m}=\sum_{i=1}^{F_{n-1}} \sqrt{F_n+i^m}, \text{ for } m\geq 1.$$ 
The sequence $T_{n,m}$ is an abstract generalization of sequence $H(n)$.

\begin{theorem}\label{Problem6} If $n>0$, then 
$$\lim_{n\to \infty}\frac{T_{n+1,m}}{T_{n,m}}=\sqrt{\frac{1}{2}(L_{m+2}+F_{m+2}\sqrt{5})}.$$
\end{theorem}

\begin{proof}
Let us simplify the expression in the right-hand side. Since 
$$
F_m=\frac{\alpha^m-\beta^m}{\sqrt{5}}\qquad {\text{\rm and}}\quad L_m=\alpha^m+\beta^m,\quad {\text{\rm where}}\quad (\alpha,\beta):=(\varphi,-\varphi^{-1}), 
$$
we have 
$$
{\sqrt{\frac{1}{2}\left(L_{m+2}+{\sqrt{5}}F_{m+2}\right)}}=\alpha^{(m+2)/2}=\varphi^{(m+2)/2}.
$$
We consider two cases for $m$. If $m=1$, then the sum becomes 
$$
T_{n,1}=\sum_{F_{n}<k\le F_{n+1}} {\sqrt{k}}.
$$
We use that the estimate (see \cite{FLuca})
$$
\sum_{k\le x} {\sqrt{k}} =\frac{2}{3} x^{3/2}+O({\sqrt{x}})
$$
holds for all $x\ge 1$.  
Then 
$$
\sum_{F_n<k\le F_{n+1}} {\sqrt{k}}=\sum_{k\le F_{n+1}} {\sqrt{k}}-\sum_{k\le F_n} {\sqrt{k}}=\frac{2}{3}\left(F_{n+1}^{3/2}-F_n^{3/2}\right)+O({\sqrt{F_{n+1}}}).
$$
Since $F_n=\varphi^n/{\sqrt{5}}(1+o(1))$, we have that 
$$
F_{n+1}^{3/2}=\varphi^{3(n+1)/2}/5^{3/4} (1+o(1)),\qquad {\text{\rm and}}\qquad F_n^{3/2}=\varphi^{3n/2}/5^{3/4}(1+o(1)). 
$$
So,
$$
F_{n+1}^{3/2}-F_n^{3/2} =\phi^{3n/2}/5^{3/4}(\varphi^{3/2}-1)(1+o(1)).
$$
This gives that 
$$
T_{n,1}=\varphi^{3n/2}/5^{3/4}(\varphi^{3/2}-1)(1+o(1))+O(\varphi^{n/2})=\varphi^{3n/2}/5^{3/4}(\varphi^{3/2}-1)(1+o(1)).
$$
Therefore, 
$$
\frac{T_{n+1,1}}{T_{n,1}}=\frac{\varphi^{3(n+1)/2}/5^{3/4}(\varphi^{3/2}-1)(1+o(1))}{\varphi^{3n/2}/5^{3/4}(\varphi^{3/2}-1)(1+o(1))}=\phi^{3/2}(1+o(1)),
$$
This implies that 
$$
\lim_{n\to\infty} \frac{T_{n+1,1}}{T_{n,1}}=\varphi^{3/2}=\varphi^{(m+2)/2},
$$
which is what we wanted. 

Case $m\ge 2$. We observe that  
$$
i^{m/2}\le {\sqrt{F_n+i^m}}\le i^{m/2}+{\sqrt{F_n}}\qquad {\text{\rm for~all}}\qquad i\in [1,F_{n-1}].
$$
Thus, 
$$
S_{n,m}\le T_{n,m}\le S_{n,m}+F_{n-1}{\sqrt{F_n}},
$$
where 
$$
S_{n,m}:=\sum_{i=1}^{F_{n-1}} i^{m/2}.
$$
Again by the Abel summation formula, or elementary comparisons between $S_{n,m}$ and $\int_1^{F_{n-1}} x^{m/2}dx$, we get that 
$$
S_{n,m} =\left(\frac{2}{m+2}\right) F_{n-1}^{(m+2)/2}+O_m(F_{n-1}^{m/2}). 
$$
Since $m\ge 2$, $(m+2)/2\ge 2$. So, $F_{n-1}^{(m+2)/2}\gg \varphi^{(m+2)(n-1)/2}$, while 
$$F_{n-1}{\sqrt{F_n}}<\varphi^{3n/2}=o(\varphi^{(m+2)(n-1)/2})\qquad {\text{\rm  as }}\qquad n\to\infty,
$$ 
and $m\ge 2$ is fixed. 
This shows that 
$$
F_{n-1}{\sqrt{F_n}}=o(S_{n,m}),
$$
and therefore 
$$
T_{n,m}=S_{n,m}(1+o(1))=\left(\frac{2}{m+2}\right) F_{n-1}^{(m+2)/2}(1+o(1)).
$$
It follows that 
$$
\frac{T_{n+1,m}}{T_{n,m}}=\frac{(2/(m+2))F_{n}^{(m+2)/2}(1+o(1))}{(2/(m+2))F_{n-1}^{(m+2)/2}(1+o(1))}=\left(\frac{F_{n}}{F_{n-1}}\right)^{(m+2)/2}(1+o(1)).
$$
This implies that 
$$
\lim_{n\to\infty} \frac{T_{n+1,m}}{T_{n,m}}=\left(\lim_{n\to\infty} \frac{F_{n}}{F_{n-1}}\right)^{(m+2)/2}=\varphi^{(m+2)/2},
$$
which is what we wanted. 
\end{proof}

\section{Angles of the Fibonacci--Theodorus spiral}

In this section, we investigate the angular properties of the Fibonacci--Theodorus spiral. Notably, we provide precise bounds and asymptotic approximations for these angles. However, it is important to acknowledge that a few questions remain open. 

For example, while we demonstrate that the number of triangles required for a full revolution is 10, a particular question remains open: is it possible to find two hypotenuses that lie on the same straight line? This is one of the questions that we seek to address within the realm of the Fibonacci-Theodorus spiral. 

\begin{proposition}\label{PFS1} For the spiral depicted in Figure \ref{FibonacciTheodorusSpiral}, the following statements hold:
\begin{enumerate}
    \item \label{FSP1} $\lim_{n\to\infty}\tan(\theta_n)=\sqrt{\left|\beta\right|}$. 
	\item \label{FSP2} $\lim_{n\to\infty}\theta_n=\arctan(\sqrt{|\beta|})$, which can be expressed as $$\sqrt{\left|\beta\right|}  \sum_{n=1}^\infty\frac{(-1)^n(L_n-F_n\sqrt{5})}{2n+1}.$$ 
    \item \label{FSP3} The angle $\theta_n$ satisfies the inequality 
 $$\arctan\left(\sqrt{\frac{1}{2}}\right)\leq \theta_n\leq \frac{\pi}{4}.$$
    \item \label{FSP4} It takes precisely $10$ triangles to complete one full winding.
\end{enumerate}
	\end{proposition}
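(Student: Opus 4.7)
The common starting point for all four parts is the identity $\tan(\theta_n)=\sqrt{F_n/F_{n+1}}$, which follows at once from $\Delta_n=(\sqrt{F_{n+1}},\sqrt{F_n},\sqrt{F_{n+2}})$ having opposite leg $\sqrt{F_n}$ and adjacent leg $\sqrt{F_{n+1}}$ at the vertex of $\theta_n$. Every subsequent claim will be a consequence of this formula combined with one additional ingredient.

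For Part (1), the classical limit $F_{n+1}/F_n\to\varphi$ together with $\varphi\beta=-1$, so $1/\varphi=|\beta|$, gives the stated value after pulling the limit through the square root. Part (3) then follows immediately from the monotonicity of $\tan$ on $[0,\pi/2)$: the inequality is equivalent to $1/2\le F_n/F_{n+1}\le 1$, whose right half is $F_n\le F_{n+1}$ and whose left half rearranges to $F_{n+1}\le 2F_n$, which is immediate from $F_{n+1}=F_n+F_{n-1}\le 2F_n$. The cases $n=1$ and $n=2$ show that both bounds are tight.

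For Part (2), the identity $\lim_n\theta_n=\arctan(\sqrt{|\beta|})$ is just continuity of $\arctan$ applied to Part (1). For the series representation I would substitute $x=\sqrt{|\beta|}$ into the Gregory--Leibniz expansion $\arctan(x)=\sum_{k\ge 0}(-1)^k x^{2k+1}/(2k+1)$ and then convert powers of $|\beta|$ into combinations of Fibonacci and Lucas numbers using $\beta=-|\beta|$ together with the Binet-type identity $2\beta^k=L_k-F_k\sqrt{5}$. The only subtlety here is the sign bookkeeping and an index shift to account for the $k=0$ term, where $L_0-F_0\sqrt{5}=2$.

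Part (4) is where the genuine obstacle lies, because the bounds of Part (3) are not sharp enough on their own: one checks $10\arctan(\sqrt{1/2})\approx 6.155<2\pi$ and $9\cdot(\pi/4)\approx 7.07>2\pi$, so neither one-sided estimate alone fixes the winding number. My plan is therefore to estimate each $\theta_k=\arctan(\sqrt{F_k/F_{k+1}})$ for $k=1,\ldots,10$ and to exhibit $\sum_{k=1}^{9}\theta_k<2\pi<\sum_{k=1}^{10}\theta_k$. To make the verification rigorous without appealing to a calculator, I would apply truncated Taylor bounds for $\arctan$ about a convenient point of the short interval $[\arctan\sqrt{1/2},\,\pi/4]$ furnished by Part (3), producing two-sided rational enclosures of each $\theta_k$ whose partial sums strictly straddle $2\pi$ at the indices $9$ and $10$.
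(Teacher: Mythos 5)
Parts (1)--(3) of your proposal follow the paper's own route: the paper likewise starts from $\tan(\theta_n)=\sqrt{F_n/F_{n+1}}$, obtains Part (1) from $F_{n+1}/F_n\to\varphi$ and $1/\varphi=|\beta|$, and gets Part (2) by substituting $\sqrt{|\beta|}$ into the arctangent series and invoking $|\beta|^n=(L_n-F_n\sqrt{5})/2$. Your Part (3) is in fact cleaner than the paper's: the paper derives $1\le F_{n+1}/F_n\le 2$ by a Binet-formula case analysis on the parity of $n$, whereas your one-line observation $F_{n+1}=F_n+F_{n-1}\le 2F_n$ gives the same bounds immediately. Your remark about sign and index bookkeeping in Part (2) is well taken; note that $(-1)^n|\beta|^n=\beta^n$, so the substitution actually yields $\sqrt{|\beta|}\sum_{n\ge 0}\beta^n/(2n+1)=\sqrt{|\beta|}\sum_{n\ge 0}(L_n-F_n\sqrt{5})/(2(2n+1))$, and you should check whether the displayed formula in the statement (which starts at $n=1$ and carries no factor of $\tfrac12$) is really what your computation produces.

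For Part (4) you correctly diagnose the weakness that the paper glosses over --- the bounds $\arctan\sqrt{1/2}\le\theta_n\le\pi/4$ alone do not determine the winding number, since $10\arctan\sqrt{1/2}<2\pi<9\cdot\pi/4$ --- but your remedy proves the claim only for the first winding. The assertion, and the paper's proof (which bounds $\sum_{i=0}^{9}\theta_{n+i}$ for general $n$), is that \emph{every} winding, wherever it starts, closes after exactly ten triangles; verifying $\sum_{k=1}^{9}\theta_k<2\pi<\sum_{k=1}^{10}\theta_k$ says nothing about a winding beginning at, say, the fiftieth triangle. What is needed is a uniform two-sided estimate such as $36^\circ<\theta_n<40^\circ$ for all $n\ge 3$ (which the paper asserts without proof): since $F_n/F_{n+1}$ converges to $1/\varphi$ with oscillation controlled by exactly the parity computation you used in Part (3), one can verify $\tan^2(36^\circ)<F_n/F_{n+1}<\tan^2(40^\circ)$ for $n\ge 3$, whence nine consecutive such angles sum to strictly less than $360^\circ$ and ten to strictly more; the finitely many windows containing $\theta_1$ or $\theta_2$ can then be handled by your interval-arithmetic scheme. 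Without some such uniform bound valid for all $n$, Part (4) as planned is incomplete.
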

	
	\begin{proof} To prove Part \eqref{FSP1}, observe from Figure \ref{FibonacciTheodorusSpiral} that $\tan\left(\theta_n\right)=\sqrt{F_{n}/F_{n+1}}$. It is well-known that $\lim_{n\to\infty}F_{n+1}/F_n =\varphi$ (see, for example, \cite{Vajda}). Thus, the conclusion of Part \eqref{FSP1} follows from $$\lim_{n\to\infty}\frac{F_{n}}{F_{n+1}}=\frac{1}{\varphi}=\left|\beta\right|.$$

We prove \eqref{FSP2}. According to Vajda \cite{Vajda}, we know that $\left|\beta\right|^n= (L_n-F_n\sqrt{5})/2$. Using the power series for 
$$\arctan (x)=\sum_{n = 1}^\infty (-1)^{n } \frac{x^{2n+1}}{2 n+1},$$ since $-1\leq\sqrt{|\beta|}\leq 1$, we obtain our desired result.

To prove \eqref{FSP3}, we start with Binet's formula and simplify to obtain 
		$$\begin{aligned} \frac{F_{n+1}}{F_n} 
    =\frac{\varphi(\varphi^n-\beta^n)+\varphi\beta^n-\beta^{n+1}}{\varphi^n-\beta^n}
         =\varphi+\beta^n\left(\frac{\varphi-\beta}{\varphi^n-\beta^n}\right)
  	=\varphi+\left(\frac{\varphi-\beta}{(-\varphi^2)^n-1}\right).
   \end{aligned}$$
   This implies     
		$$\frac{F_{n+1}}{F_n}=
        \begin{cases}
			\varphi+\left(\frac{\sqrt{5}}{\varphi^{2n}-1}\right), \text{ if $n$ is even}\\
			\varphi-\left(\frac{\sqrt{5}}{\varphi^{2n}+1}\right),\text{ if $n$ is odd}.
		\end{cases}$$
This gives upper and lower bounds for $F_{n+1}/F_n$, with these values occurring when $n=2$ and $n=1$ in $(-\varphi^2)^n-1$. Therefore, we have $1\leq F_{n+1}/F_n\leq 2$. This implies that $\sqrt{1/2}\leq \sqrt{F_{n-1}/F_n}\leq 1$. As a result, we have: 
$\arctan(\sqrt{1/2})\leq \theta_n\leq \pi/4$.
			
We now prove Part \eqref{FSP4}. Since $F_{n+1}/F_n$ converges to $\varphi$ and $\theta_n$ converges to $\arctan(\sqrt{|\beta|})$ as $n$ approaches $\infty$ (see Part \eqref{FSP2}), we can use the definition of the limit: for any $\epsilon > 0$, there is an integer $N$ such that for $n\geq N$,  
$|\arctan (\theta_n)-\arctan (\sqrt{|\beta|})|\leq\epsilon$. 
Approximating the values of $\theta_n$, we have  
$36^\circ<\theta_n<40^\circ$ for $n\geq 3$. Note that $\theta_2\approx 35.264^\circ<36^\circ$ and $\theta_1$ is $45^\circ$. So, 
		$360^\circ<\sum_{i=0}^9\theta_{n+i}<400^\circ$.	
	Hence it always takes ten triangles to complete one revolution.
	\end{proof}
	
In the introduction, we analyze how the distance between windings of the Theodorus spiral behaves as $n$ approaches $\infty$. We observed that the distance between windings of the Theodorus spiral tends towards $\pi$. However, the Fibonacci--Theodorus spiral, unlike the Theodorus spiral, does not approach $\pi$. Since it takes ten triangles to complete a winding, we have $\lim_{n\to\infty}{\sqrt{F_{n+10}}-\sqrt{F_n}}=\infty$. Therefore, the distance between windings increases without bounds.

\subsection{Geometric Results in the Fibonacci-Theodorus Spiral}
We now present a sequence of three results inspired by classic geometric concepts but naturally aligned with the topics of this paper. 

In the traditional geometry, we know that the \emph{incircle} or \emph{inscribed circle} of a given triangle is the largest circle that can be contained in the triangle. A triangle is \emph{circumscribed} by the circle $C$ when it passes through all three vertices in the triangle. Define $C_n$ as the circumscribed circle of $\Delta_n$, see Figure \ref{winf2} (left). We define $T_n$ as the incircle or inscribed circle of $\Delta_n$, as depicted in Figure \ref{winf2} (right).

\begin{figure}[ht]
\centering
 \includegraphics[scale = 0.65]{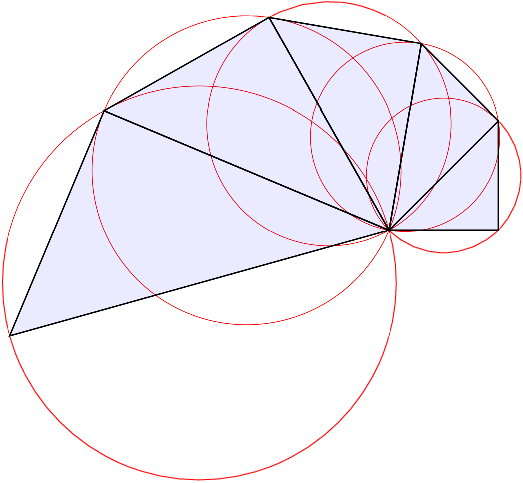} \hspace{2cm}
 \includegraphics[scale = 0.75]{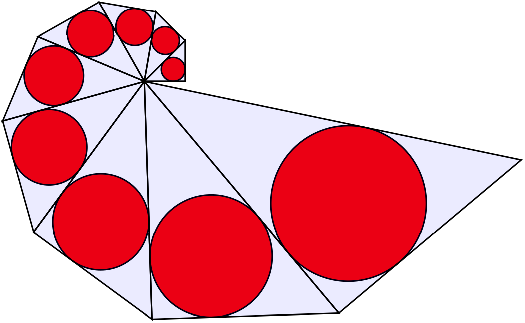}
\caption{Circumscribed and inscribed circles of $\Delta_n$.} 
	 \label{winf2}
\end{figure}

\begin{theorem}
 For $n\ge 1$, holds that the area of $C_n$ is  $(\pi/4) F_{n+2}$ and the perimeter of $C_n$ is $\pi F_{n+2}$.
\end{theorem}

\begin{proof}
Let $R_n$ be radius of $C_n$. Then $R_n={\sqrt{F_{n+2}}}/2$ --the radius of the circumscribed circle of a right triangle is $1/2$ of the hypothenuse. 
This implies that  ${{\text{\rm Area}}}(C_n)=\pi R_n^2$ and ${{\text{\rm Perimeter}}}(C_n)=2\pi R_n$. 
\end{proof}

\begin{theorem} 
 For $n\ge 1$, these hold 
\begin{enumerate}
\item \label{IncircleRn}
$$
\lim_{n\to\infty} \frac{A_n}{{\text{\rm Area}}(T_n)}=\frac{2{\sqrt{\varphi}}}{\pi (1+{\sqrt{\varphi}}-\varphi)^2}.
$$
 \item  \label{CircumscribedCn}
$$
\lim_{n\to\infty} \frac{{\text{\rm Area}}(C_n)}{{\text{\rm Area}}(T_n)}=\frac{\varphi^2}{(1+{\sqrt{\varphi}}-\varphi)^2}.
$$
\end{enumerate}
\end{theorem}

\begin{proof} We prove Part \eqref{IncircleRn}. 
First of all, we recall that $R_n={\sqrt{F_{n+2}}}/2$. Let $r_n$ be the radius of the inscribed circle. It is known that in right triangle $(a_n,b_n,c_n)$, it holds that 
$
r_n=\frac{1}{2}(a_n+b_n-c_n),
$
where $a_n$ and $b_n$ are the lengths of the legs and $c_n$ is the length of the hypothenuse.
This applied to our case gives  $r_n  =  \frac{1}{2} ({\sqrt{F_n}}+{\sqrt{F_{n+1}}}-{\sqrt{F_{n+2}}})$. Therefore, 
\begin{eqnarray*}
r_n & = & \frac{1}{2 \cdot 5^{1/4}} (\varphi^{n/2}(1+o(1))+\varphi^{(n+1)/2}(1+o(1))-\varphi^{(n+2)/2}(1+o(1)))\\
& = & 
\frac{1}{2\cdot 5^{1/4}}(1+{\sqrt{\varphi}}-\varphi) \varphi^{n/2}(1+o(1)).
\end{eqnarray*}
So, an asymptotic for ${\text{\rm Area}}(T_n)$ is given by 
$$
\pi r_n^2=\frac{\pi}{4{\sqrt{5}}}(1+{\sqrt{\varphi}}-\varphi)^2\varphi^{n}(1+o(1)).
$$
Certainly,
$$
A_n=\frac{\sqrt{F_nF_{n+1}}}{2}=\frac{1}{2{\sqrt{5}}} \varphi^{n/2}(1+o(1))\varphi^{(n+1)/2}(1+o(1))=\frac{{\sqrt{\varphi}}}{2{\sqrt{5}}} \varphi^{n}(1+o(1)).
$$
So, we see that
$$
\frac{A_n}{{\text{\rm Area}}(T_n)}=\frac{{\sqrt{\varphi}}/(2{\sqrt{5}})\varphi^n(1+o(1))}{\pi/(4{\sqrt{5}})(1+{\sqrt{\varphi}}-\varphi)^2 \varphi^n (1+o(1))}=\frac{2{\sqrt{\varphi}}}{\pi(1+{\sqrt{\varphi}}-\varphi)^2}(1+o(1)).
$$
This implies that 
$$
\lim_{n\to\infty} \frac{A_n}{{\text{\rm Area}}(T_n)}=\frac{2{\sqrt{\varphi}}}{\pi (1+{\sqrt{\varphi}}-\varphi)^2}.
$$

We prove Part \eqref{CircumscribedCn}. The area of $C_n$ is given by 
$$
\pi R_n^2=\pi \left(\frac{F_{n+2}}{4}\right)=\frac{\pi\varphi^{n+2}}{4{\sqrt{5}}}(1+o(1))=\frac{\pi \varphi^2}{4{\sqrt{5}}} \varphi^n (1+o(1)).
$$
Hence,
$$
\frac{{\text{\rm Area}}(C_n)}{{\text{\rm Area}}(T_n)}=\frac{\pi R_n^2}{\pi r_n^2}=\frac{\pi \varphi^2/(4{\sqrt{5}}) \varphi^n(1+o(1))}{\pi/(4{\sqrt{5}})(1+{\sqrt{\varphi}}-\varphi)^2 \varphi^n(1+o(1))}=\frac{\varphi^2}{(1+{\sqrt{\varphi}}-\varphi)^2} (1+o(1)). 
$$
This implies that 
$$
\lim_{n\to\infty} \frac{{\text{\rm Area}}(C_n)}{{\text{\rm Area}}(T_n)}=\frac{\varphi^2}{(1+{\sqrt{\varphi}}-\varphi)^2}.
$$
This completes the proof.
\end{proof}

\begin{problem}
How many triangles are needed to complete two and three windings in the Fibonacci spiral?
\end{problem}

\section{Some generalizations}

In this final section, we introduce a general spiral, which we term the \emph{Sequence--Theodorus Spiral}. The construction process is as illustrated in Figure \ref{FibonacciTheodorusSpiral}. To initiate this construction, consider a second-order recurrence relation: $a_{n+1}=a_n+a_{n-1}$ for $n\geq 2$. Here, we set $a_1=a$ and $a_2=b$. In case $a_0 \ne 0$, we can make the appropriate adjustment by simply shifting the sequence by one. The construction involves a specific triangle, as illustrated on the left graph and in the middle graph of Figure ~\ref{fig:FT2GenCase}. For $n\geq 2$, this triangle has an adjacent side of $\sqrt{a_{n}}$, an opposite side of $\sqrt{a_{n-1}}$, and a hypotenuse of $\sqrt{a_{n+1}}$.

 \begin{figure}[ht]
	\centering
 \includegraphics[width=3.5in]{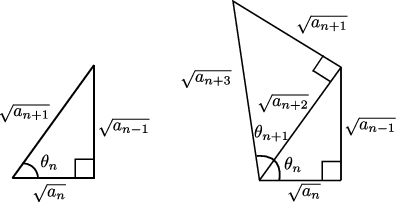} \hspace{.5cm}
	\includegraphics[scale = 0.35]{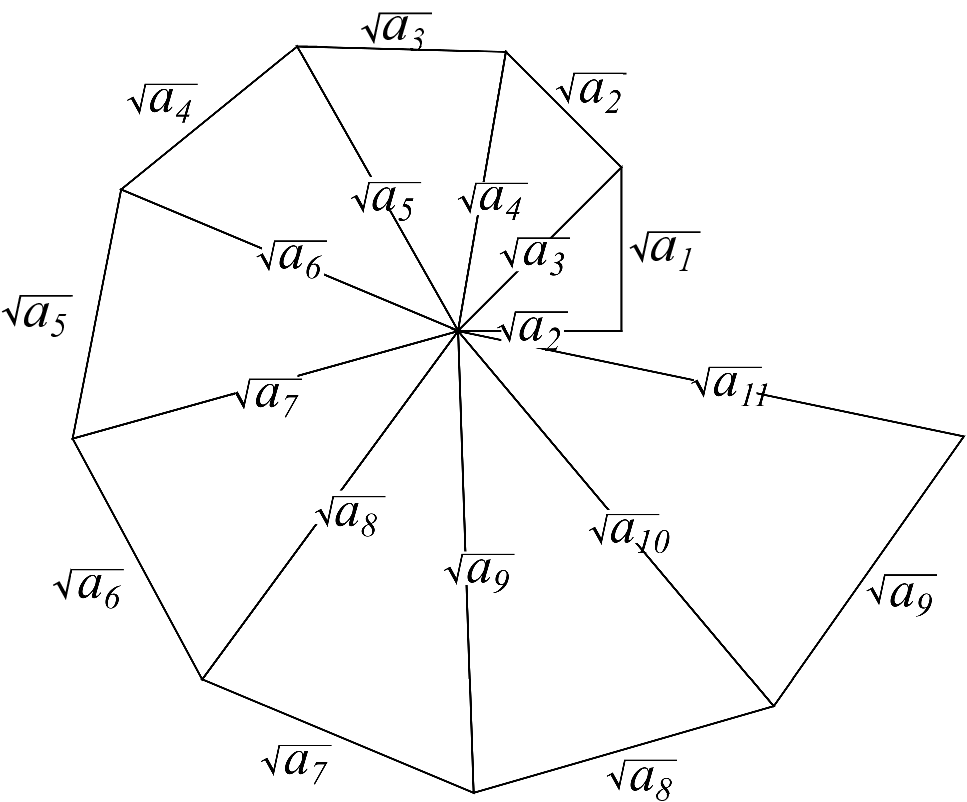}
	\caption{Generalized Theodorus Spiral.}
	\label{fig:FT2GenCase}
	\end{figure}

\subsection{A problem}
By connecting each of these triangles, as demonstrated in the figure, we achieve the desired Sequence--Theodorus Spiral. The right-hand side of the figure provides a visualization of the Sequence--Theodorus Spiral's construction, emphasizing its foundation in the general construction method. This method can be used to generate spirals for second-order recurrence relations. For instance, if we set $a_n = L_n$, we can aptly refer to it as the \emph{Lucas--Theodorus Spiral}. Similarly, we can create the \emph{Pell--Theodorus Spiral} or the \emph{Jacobsthal--Theodorus Spiral}, and so on. See also a second construction done by Crilly in \cite{Crilly}.

\begin{problem}
Explore the geometric results discussed in this paper for the Fibonacci sequence, adapting them to the specific sequence considered in this section.
\end{problem}

\medskip

\end{document}